\documentclass[
a4paper,
11pt,
DIV=14,
toc=bibliography, 
headsepline, 
parskip=half-,
abstract=true,
]{scrartcl}

\usepackage[centertags]{amsmath}
\usepackage[utf8]{inputenc}
\usepackage{enumerate,amsfonts,amssymb,amsthm,amsopn,cite,array,MnSymbol,comment}

\usepackage[usenames]{color}
\definecolor{citegreen}{rgb}{0,0.6,0}
\definecolor{refred}{rgb}{0.8,0,0}
\usepackage[colorlinks, citecolor=citegreen, linkcolor=refred, urlcolor=black]{hyperref}

\usepackage[bbgreekl]{mathbbol}
\DeclareSymbolFontAlphabet{\mathbb}{AMSb}
\DeclareSymbolFontAlphabet{\mathbbm}{bbold}


\title{Mean Curvature Flow and Heegaard Surfaces\\ in Lens Spaces}
\author{Reto Buzano and Sylvain Maillot}

\newtheorem{theo}{Theorem}[section]
\newtheorem{lemma}[theo]{Lemma}

\newtheorem{prop}[theo]{Proposition}
\theoremstyle{definition}
\newtheorem{rem}[theo]{Remark}

\newtheorem{defi}[theo]{Definition}
\newtheorem{claim}{Claim}
\newtheorem{case}{Case}
\numberwithin{equation}{section}

\let\bord\partial
\let\emph\emph

\def\calM{\mathcal{M}}
\def\calG{\mathcal{G}}

\newcommand{\RR}{\mathbb R}
\newcommand{\CC}{\mathbb C}
\newcommand{\ZZ}{\mathbb Z}
\newcommand{\Sph}{\mathbb S}
\newcommand{\bH}{\mathbb H}
\newcommand{\bA}{\mathbb A}
\newcommand{\bb}{\mathbbm b}

\newcommand{\eps}{\varepsilon}
\newcommand{\calD}{\mathcal{D}}
\newcommand{\calC}{\mathcal{C}}
\newcommand{\calX}{\mathcal{X}}
\newcommand{\tth}{\mathrm{th}}
\newcommand{\neck}{\mathrm{neck}}
\newcommand{\trig}{\mathrm{trig}}
\newcommand{\ext}{\mathrm{ext}}

\providecommand{\abs}[1]{\lvert #1\rvert}
\providecommand{\norm}[1]{\lVert #1\rVert}

\DeclareMathOperator{\Ric}{Ric}

\DeclareMathOperator{\Diff}{Diff}
\DeclareMathOperator{\inj}{inj}
\DeclareMathOperator{\Int}{Int}
\date{}

\newcommand\printaddress{{
\setlength{\parindent}{15pt}
\footnotesize~
\par
{\scshape Reto Buzano}
\newline 
Universit\`a degli Studi di Torino, Dipartimento di Matematica,
Torino, Italy 
\newline
\emph{E-mail address:} 
\texttt{reto.buzano@unito.it}
\par
{\scshape Sylvain Maillot}
\newline 
IMAG, Universit\'e de Montpellier, CNRS,
Montpellier, France
\newline
\emph{E-mail address:} 
\texttt{sylvain.maillot@umontpellier.fr}
\par
}}


\begin{document}
\maketitle
\begin{abstract}
We prove that the moduli space of mean convex two-spheres embedded in complete, orientable $3$-dimensional Riemannian manifolds with nonnegative Ricci curvature is path-connected. This result is sharp in the sense that neither of the conditions of (strict) mean convexity, completeness, and nonnegativity of the Ricci curvature can be dropped or weakened. We also study the number of path components of mean convex Heegaard tori, again in ambient manifolds with nonnegative Ricci curvature. We prove that there are always either one or two path components and this number does not only depend on the homotopy type of the ambient manifold. We give a precise characterisation of the two cases and also discuss what happens if the mean convexity condition is weakened to nonnegative mean curvature.
\end{abstract}


\section{Introduction}
Given a closed (smooth) manifold $M^n$, a classical problem in Riemannian geometry is to investigate the homotopy type of the moduli space of positive scalar curvature metrics
\begin{equation*}
\calM_{R>0}(M) := \{ g \text{ Riemannian metric on $M$ with } R_g >0\} / \Diff(M).
\end{equation*} 
This problem has its origins in an over 100 years old result of Weyl~\cite{We16} who showed that for $n=2$, this space is always path-connected. In fact, Rosenberg-Stolz~\cite{RS01} proved that it is contractible. When $n=3$, it is only much more recently that Marques~\cite{Mar12} proved path-connectedness and Bamler-Kleiner~\cite{BK19}  proved contractibility. These results are in sharp contrast to the situation in higher dimensions where the moduli spaces can have very complicated topology -- though their homotopy types are in general unknown. An illustrative example is given by spheres $\Sph^{4k+3}$, $k\geq 1$, for which the moduli spaces are known to have infinitely many path components~\cite{KS93}.

In this article, we are concerned with the following extrinsic version of this problem: given a Riemannian three-manifold $(M^3,g)$ and a closed surface $\Sigma^2$, investigate the homotopy type of the moduli space of mean convex two-sided embeddings of $\Sigma$ into $M$, 
\begin{equation*}
\calM_{H>0}(\Sigma,M) := \{ \Sigma \hookrightarrow M \text{ smooth two-sided embedding with } H >0\} / \Diff(\Sigma).
\end{equation*} 
This extrinsic problem was first studied by the first author in joint work with Haslhofer and Hershkovits in the case where $(M^3,g) = (\mathbb{R}^3, g_{Eucl.})$ is Euclidean space.\footnote{In fact, they studied the moduli spaces of \emph{two-convex} embeddings of hypersurfaces $\Sigma^{n-1}$ into $(\mathbb{R}^n,g_{Eucl.})$, but we only focus on dimension $n=3$ here.} In \cite{BHH21}, they obtained that for $\Sigma = \Sph^2$ the moduli space is path-connected and in \cite{BHH19} they proved that for $\Sigma = T^2 = \Sph^1 \times \Sph^1$ the connected components of the moduli space are in bijective correspondence with the knot classes of closed embedded curves.

The aim of this article is to extend the study of the extrinsic problem to more general ambient three-manifolds $(M^3,g)$. In this context we say that a smoothly embedded, two-sided surface $\Sigma \subset M$ has \emph{positive mean curvature} (respectively \emph{nonnegative mean curvature}) with respect to $g$ if there exists a choice of unit normal vector field $\nu$ on $\Sigma$ such that $H>0$ (resp.~$H\ge 0$) everywhere. In other words, $\Sigma$ has positive mean curvature if the mean curvature vector always points to the same side of $\Sigma$. Note that this latter definition only makes sense if $\Sigma$ is two-sided. We also say that $\Sigma$ is \emph{mean convex} if it has positive mean curvature.

For $\Sigma = \Sph^2$, we obtain the following generalisation of the main result from \cite{BHH21}.

\begin{theo}\label{theo.sphere}
Let $(M,g)$ be a complete, orientable Riemannian three-manifold with nonnegative Ricci curvature. Then the moduli space $\calM_{H>0}(\Sph^2,M)$ of mean convex embedded two-spheres in $(M,g)$ is path-connected.
\end{theo}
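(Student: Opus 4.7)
The plan is to adapt the mean curvature flow with surgery strategy of \cite{BHH21} from the Euclidean setting to the curved ambient manifold $(M,g)$. The key observation is that the hypothesis $\Ric\ge 0$ is precisely what is needed to preserve mean convexity along MCF: in an ambient Riemannian manifold the mean curvature satisfies
\begin{equation*}
\partial_t H = \Delta H + \bigl(|A|^2 + \Ric(\nu,\nu)\bigr) H,
\end{equation*}
so $H>0$ is preserved by the parabolic maximum principle. The same curvature condition is also compatible with the noncollapsing, convexity, and gradient estimates of Andrews, Huisken--Sinestrari, Haslhofer--Kleiner and Brendle that underpin the singularity theory of mean convex MCF, so every blow-up limit is a shrinking round sphere or a shrinking round cylinder, and the surgery procedure carries over to $(M,g)$.

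The argument then proceeds in three steps. \emph{First}, given $\Sigma \in \calM_{H>0}(\Sph^2,M)$, I would run mean curvature flow with surgery starting from $\Sigma$. Between surgery times the flow is smooth, keeps the topological type $\Sph^2$, and stays in $\calM_{H>0}(\Sph^2,M)$, hence directly yields continuous paths in the moduli space on every smooth subinterval. \emph{Second}, at each surgery time the pre-surgery surface contains a canonical $\eps$-neck which is cut off and replaced by two standard caps, producing two mean convex spheres; one of them is a small standard sphere bounding a geodesic ball, while the other is the principal continuation of the original. I would realise each such surgery as a continuous one-parameter family of mean convex two-spheres in $M$ by first continuously narrowing the neck while keeping both lobes mean convex, and then isotoping away the small standard component through a family of mean convex spheres in a small geodesic ball, collapsing it to a point. \emph{Third}, after finitely many surgeries the flow extinguishes, so concatenating the smooth MCF segments with these surgery interpolations produces a continuous path in $\calM_{H>0}(\Sph^2,M)$ from $\Sigma$ to some standard small mean convex sphere $\Sigma_\star \subset M$.

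To finish, I would show that any two standard small mean convex spheres in $M$ are joined by a path in the moduli space. Since $M$ is complete and connected, any two points are joined by a continuous curve $\gamma$; by compactness of $\gamma$ one can choose a uniform small radius $r>0$ such that the geodesic sphere of radius $r$ about every point of $\gamma$ is mean convex (using the asymptotic expansion of the second fundamental form of small geodesic spheres together with a Rauch-type comparison), and translating along $\gamma$ through this family of spheres provides the desired path.

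The hard part will be the second step above: the continuous realisation of a surgery inside $\calM_{H>0}(\Sph^2,M)$. Mean curvature flow with surgery is inherently a discontinuous procedure, and interpolating from the pre- to the post-surgery surface by an explicit continuous family of embedded mean convex spheres requires a careful cut-and-paste construction in the neck region together with a controlled deformation of the discarded component. Here the nonnegativity of the Ricci curvature enters essentially, both to maintain $H>0$ under the explicit local deformations and to provide the mean convex barriers and standard models needed in small geodesic balls of $(M,g)$; the sharpness claims discussed in the introduction suggest that any weakening of this hypothesis would actually obstruct such an interpolation.
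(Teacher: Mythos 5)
Your overall strategy -- run mean curvature flow with surgery, interpolate across surgeries inside the moduli space, terminate at a small geodesic sphere, and then slide small geodesic spheres around $M$ -- is indeed the skeleton of the paper's argument. However, there is one genuine gap at the very start that you never address, and it is precisely where the hypotheses of completeness, orientability, and $\Ric_g\ge 0$ are used in an essential \emph{topological} (not merely PDE) way.

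To even set up MCF with surgery in the framework of Haslhofer--Kleiner/Haslhofer--Ketover, and to obtain the finite-time extinction that your step three relies on, one must know that $\Sigma$ bounds a \emph{compact mean convex domain}, i.e.\ a compact region on the side into which the mean curvature vector points. You run the flow ``starting from $\Sigma$'' and simply assume this. But a priori $\Sigma$ might not separate $M$ at all (e.g.\ $\Sph^2\times\{0\}\subset \Sph^2\times\Sph^1$), or it might bound a compact ball only on the side \emph{opposite} to the mean curvature vector (e.g.\ a small geodesic sphere in $\Sph^3\setminus\{p\}$, or a slightly warped product $\Sph^2\times\RR$ with $\Ric_g\ge -\eps$). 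In all these situations your flow argument breaks down, and in fact the conclusion of the theorem is false -- these are exactly the counterexamples indicated in Remark~\ref{rem.theo1sharp}. The paper resolves this as Proposition~\ref{prop.bounddomain}, whose proof is not a formality: it passes to the universal cover, invokes Liu's classification~\cite{Liu} of complete orientable three-manifolds with $\Ric_g\ge 0$ (universal cover diffeomorphic to $\Sph^3$, $\RR^3$, or a product $\Sph^2\times\RR$), and in the non-spherical cases uses Alexander's theorem together with a comparison theorem of Kasue~\cite{Kasue} (or a sliding-sphere barrier argument in the split case) to rule out the mean curvature vector pointing toward the noncompact side. Any correct proof of Theorem~\ref{theo.sphere} must contain some version of this step, and it is independent of the MCF machinery.

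On your second step, the ``continuous realisation of a surgery'': you rightly flag this as the hard technical step, but the specific mechanism you suggest -- continuously narrowing a neck while keeping both lobes mean convex, then collapsing the small component to a point -- is problematic as stated. Collapsing a component to a point exits the moduli space of embedded spheres, and it is not at all clear that narrowing a neck can be done mean-convexly in an ambient manifold. The paper instead builds a quantitative gluing map (Theorem~\ref{thm.glue}) that reconnects post-surgery caps by thin tubular ``strings'' along almost-geodesics, proves this is mean convex isotopic to the pre-surgery neck (Proposition~\ref{prop.surgeryiso}), and runs a backwards induction reducing any mean convex domain to a \emph{marble tree} (Theorem~\ref{theo.isotopy1}), which is then collapsed marble-by-marble to a single small ball (Theorem~\ref{theo.isotopy2}). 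This keeps everything as a single connected mean convex domain throughout and requires the technical Lemma~\ref{lemma.glue} to transfer the Euclidean gluing estimates of~\cite{BHH21} to the curved setting via exponential coordinates. Your final step (connecting two small geodesic spheres along a path in $M$) matches the paper's argument.
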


\begin{rem}\label{rem.theo1sharp}
Theorem \ref{theo.sphere} does not hold if the condition $H>0$ is weakened to $H\geq 0$ or if $\Ric_g \geq 0$ is weakened to $\Ric_g \geq -C$ for some $C>0$ (or replaced by $1 \geq \Ric_g \geq -\eps$ for some $\eps>0$). Indeed, in both cases, it is easy to construct examples where the moduli space contains two-spheres that do not bound a ball and others that do; thus the moduli space has at least two path components. Moreover, the completeness condition on $M$ cannot be dropped: for example if $M$ is a round three-sphere with a point removed, then every mean convex two-sphere bounds a ball, but for some of them the mean curvature vector points toward the ball while for others it points in the opposite direction. Hence the moduli space again has at least two path components. See Proposition \ref{prop.bounddomain} and Remark \ref{remark.bounddomain} below for more details.
\end{rem}

For $\Sigma = T^2$, we restrict the problem to \emph{Heegaard surfaces} in order to guarantee that $\Sigma$ bounds a solid (unknotted) torus. It is well known that the moduli space of \emph{all} Heegaard tori is path-connected, but surprisingly, in contrast to the Euclidean case, the moduli space of \emph{mean convex} Heegaard tori can have more than one path component. Our main theorem says exactly how many there are depending on the ambient manifold $M$. Before stating the results, we give some definitions and fix notation. 

In his PhD thesis, Heegaard \cite{He98} introduced an intriguingly simple way to construct complicated orientable three-manifolds: taking two handlebodies of the same genus, one can glue their boundaries together by a (possibly very complicated) diffeomorphism. Here, a handlebody of genus $k$ is an orientable three-manifold with boundary obtained from a closed $3$-ball $\bar{B}^3$ by attaching $k$ handles, the process of attaching a handle being understood as removing two disjoint closed disks $D_0$ and $D_1$ in $\partial B^3$ and attaching $D^2 \times [0,1]$ by identifying $D^2 \times \{j\}$ with $D_j$, $j=0,1$. As a consequence of Whitehead's triangulation theorem~\cite{Wh40}, in fact every closed, orientable three-manifold can be obtained in this way. Conversely, if $M$ is a closed, orientable three-manifold, and $\Sigma$ a closed, orientable surface smoothly embedded in $M$, we say that $\Sigma$ is a \emph{Heegaard surface} if the closure of each component of $M\setminus \Sigma$ is a handlebody. Three-manifolds admitting a Heegaard surface of genus one are called \emph{lens spaces}. Let us now describe a general construction of them.

Let $\Sph^1$ and $D^2$ be the unit circle and the unit disk in $\CC$, respectively. A \emph{solid torus} $V$ is a three-manifold diffeomorphic to $\Sph^1\times D^2$. Its \emph{core} is the simply closed curve $\Sph^1\times \{0\}$ and the \emph{longitude} and \emph{meridian} are defined as $\Sph^1 \times \{1\}$ and $\{1\} \times \Sph^1$, respectively. Note that these definitions imply that a diffeomorphism to $\Sph^1\times D^2$ has been fixed, so these concepts are defined only up to isotopy.

Let $p,q$ be coprime integers with $p\ge 0$. The \emph{lens space} $L(p,q)$ is obtained by gluing together two solid tori $V_1$ and $V_2$ via the diffeomorphism $\theta:\partial V_1\to \partial V_2$ defined by 
\begin{equation*}
\theta(u,v)=(u^r v^p, u^s v^q),
\end{equation*}
where $r,s$ are integers such that $rq-sp=1$. Put differently, $L(p,q)$ is the three-manifold obtained by gluing the boundaries of two solid tori in such a way that the meridian of the first torus goes to a curve wrapping around the longitude $p$ times and around the meridian $q$ times on the second torus. We denote by $\gamma_k$ the core of $V_k$ for $k\in\{1,2\}$. By construction, the torus $\Sigma = \bord V_1$ is Heegaard.

Special cases of lens spaces include the $3$-sphere $\Sph^3=L(1,0)$, real projective space $\RR \mathbb{P}^3=L(2,1)$, and $\Sph^1\times \Sph^2=L(0,1)$. On the one hand, the standard product metric on $L(0,1)= \Sph^1\times \Sph^2$ has $\Ric_g \geq 0$. On the other hand, in the case $p\neq 0$, $L(p,q)$ has an alternative description as a quotient of $\Sph^3 \subset \mathbb{C}^2$ by the (finite cyclic) group of isometries generated by
\begin{equation*}
(z_1,z_2) \mapsto (e^{2\pi i/p}z_1, e^{2\pi i q/p}z_2)
\end{equation*}
and therefore carries a Riemannian metric with (constant) positive sectional curvature, thus in particular again with $\Ric_g\geq 0$. The assumption of nonnegative Ricci curvature is therefore quite natural for lens spaces and throughout the article (unless otherwise mentioned), we will always assume that we have fixed a background metric $g$ on $L(p,q)$ with $\Ric_g \geq 0$.

\begin{defi}
We denote by 
\begin{equation*}
\calM_{H>0}(p,q) = \calM_{H>0}(T^2, L(p,q))
\end{equation*} 
the \emph{moduli space of Heegaard tori} in $L(p,q)$ that have positive mean curvature with respect to the background metric $g$ on $L(p,q)$. Similarly, $\calM_{H\geq 0}(p,q)$ denotes the moduli space of Heegaard tori in $L(p,q)$ with nonnegative mean curvature.
\end{defi}

Our main results are the following two theorems.

\begin{theo}\label{theo.mainone}
Let $p,q$ be coprime integers with $p \geq 0$. Fix a metric $g$ on the lens space $L(p,q)$ with nonnegative Ricci curvature. If $q\cong \pm 1 \!\mod p$, then $\calM_{H>0}(p,q)$ is path-connected. Otherwise it has exactly two path-components.
\end{theo}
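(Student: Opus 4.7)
The plan is to identify a natural invariant that classifies the path components of $\calM_{H>0}(p,q)$. For $\Sigma \in \calM_{H>0}(p,q)$, two-sidedness together with the condition $H>0$ determines a unique unit normal $\nu_\Sigma$, and hence a distinguished \emph{mean-convex side}: the solid torus component $V_\Sigma$ of $L(p,q) \setminus \Sigma$ into which the mean curvature vector $-H\nu_\Sigma$ points. I would define $\phi(\Sigma)$ to be the isotopy class in $L(p,q)$ of the core curve of $V_\Sigma$. Since $\nu_\Sigma$, and hence $V_\Sigma$, varies continuously along any continuous path in $\calM_{H>0}(p,q)$, $\phi$ is locally constant and so descends to a well-defined map
\begin{equation*}
\phi : \pi_0(\calM_{H>0}(p,q)) \longrightarrow \{[\gamma_1], [\gamma_2]\},
\end{equation*}
where $\gamma_1,\gamma_2$ are the cores of the two handlebodies in the standard Heegaard splitting of $L(p,q)$.

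Next I would pin down the image and the codomain of $\phi$. For any smooth simple closed curve representing $[\gamma_i]$, small enough tubular neighborhoods are bounded by mean convex Heegaard tori whose mean-convex side contains that curve; this shows that both $[\gamma_1]$ and $[\gamma_2]$ lie in the image of $\phi$, giving a lower bound of two components whenever these classes differ. Whether $[\gamma_1]=[\gamma_2]$ is purely a topological question about the lens space: the two cores represent the elements $1$ and $\pm q^{\pm 1}$ of $\pi_1(L(p,q)) \cong \ZZ/p\ZZ$, and for this class of curves (generators of a cyclic fundamental group, arising as cores of a Heegaard splitting in a Seifert fibred space) homotopy is known to imply isotopy, so $[\gamma_1]=[\gamma_2]$ if and only if $q \equiv \pm 1 \pmod{p}$.

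The core analytic step is to prove that each fibre of $\phi$ is path-connected. For this, I would adapt to the ambient $(L(p,q),g)$ the Euclidean strategy of \cite{BHH19}: run mean curvature flow with surgery starting from $\Sigma$, in the Haslhofer--Kleiner framework for mean convex MCF. The hypothesis $\Ric_g \geq 0$ together with the compactness of $L(p,q)$ is enough for the required curvature, cylindrical, convexity and noncollapsing estimates to hold in this Riemannian ambient (as developed by Haslhofer--Kleiner and Brendle--Huisken for mean convex flows in Riemannian three-manifolds). Each neck surgery can be bridged by a continuous mean convex isotopy so that the composite deformation stays inside $\calM_{H>0}(p,q)$; following the flow until the only surviving ``main'' piece is a thin, almost round tube around a simple closed curve $c$ yields a path in $\calM_{H>0}(p,q)$ from $\Sigma$ to such a thin tube, with $[c] = \phi(\Sigma)$. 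Given two tori $\Sigma, \Sigma'$ with $\phi(\Sigma) = \phi(\Sigma')$, I would flow each to a thin tube and then connect the two tubes by an ambient isotopy of their cores, realised as a continuous family of mean convex tubes of shrinking radius. Combining this with the previous paragraph yields exactly one path component in the case $q \equiv \pm 1 \pmod{p}$ and exactly two otherwise.

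The main obstacle I expect is the third step: setting up mean curvature flow with surgery in a general Riemannian three-manifold under only the pointwise hypothesis $\Ric_g \geq 0$, and, more delicately, verifying that every topology-changing surgery can be replaced by a continuous path of mean convex Heegaard tori so that one never leaves $\calM_{H>0}(p,q)$ (rather than jumping to a disconnected surface, or to a component of different genus). A secondary technical point is the terminal singularity analysis, where one must certify that the limiting core curve lies in the isotopy class $\phi(\Sigma)$ rather than in some other class produced by the flow's interaction with the non-trivial topology of the lens space.
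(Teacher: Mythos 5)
Your proposal follows essentially the same route as the paper: you introduce the same invariant (the isotopy/homology class of the core of the mean convex solid torus $V_\Sigma$), you show that both standard cores are realised by thin mean convex tubes, you reduce each $\Sigma$ by mean curvature flow with surgery to a thin tube around a simple closed curve, and you determine when the two standard cores coincide as an (iso)topy class. The paper's structure is the same, organized around Proposition~\ref{prop.mcf}: every mean convex $V$ is mean convex isotopic to one of $V_1^\eps, V_2^\eps$ (at most two components), and the invariant $\phi(T)=[\gamma_k]$ valued in $\mathrm{H}_1(L(p,q),\ZZ)/{\pm 1}$ is locally constant (at least two components when $q\not\cong\pm 1$), with Bonahon's Lemme~4 giving path-connectedness when $q\cong\pm 1$.

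One point to tighten: your claim that ``homotopy is known to imply isotopy'' for these curves is too cavalier as stated. What is actually needed is Bonahon's uniqueness theorem for Heegaard tori in lens spaces --- it forces the limiting core $c$ produced by the flow to be isotopic to $\gamma_1^0$ or $\gamma_2^0$, and hence pins the codomain of $\phi$ to $\{[\gamma_1],[\gamma_2]\}$ --- together with the homology computation of Bonahon--Otal showing $[\gamma_1]\neq[\gamma_2]$ in $\mathrm{H}_1/{\pm 1}$ when $q\not\cong\pm 1 \!\mod p$, and Bonahon's Lemme~4 in the other case. Working with the homology class (mod $\pm 1$) of the core, as the paper does, is cleaner than the isotopy class because local constancy is then immediate. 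Otherwise you correctly identify the main technical burden: extending the gluing, surgery and marble-reduction arguments of Buzano--Haslhofer--Hershkovits to a Riemannian ambient with $\Ric_g\ge 0$, which occupies Sections~\ref{sec.glue}--\ref{sec.BHH} of the paper.
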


\begin{rem}
An interesting aspect of Theorem~\ref{theo.mainone} is that it is not true that the homotopy type of the moduli spaces $\calM_{H>0}(p,q)$ depends only on the homotopy type of the ambient space $L(p,q)$. An illustrative example is given by $L(7,1)$ and $L(7,2)$. These two lens spaces have the same homotopy type (and therefore also isomorphic fundamental groups and the same homology). Nevertheless, by Theorem \ref{theo.mainone}, $\calM_{H>0}(7,1)$ is path-connected while $\calM_{H>0}(7,2)$ is not.
\end{rem}

By a result of Bonahon~\cite{Bo83}, any two Heegaard tori in a fixed lens space $L(p,q)$ are isotopic. In particular, when $q\not\cong \pm 1 \!\mod p$ we can pick two Heegaard tori $T_1$ and $T_2$ lying in different components of $\calM_{H>0}(p,q)$; then $T_1$ and $T_2$ are isotopic, but no isotopy between them preserves mean convexity. By contrast, our next result says that we can always find an isotopy that preserves nonnegativity of mean curvature (at least in the case where the ambient lens space has positive Ricci curvature).

\begin{theo}\label{theo.maintwo}
Let $p,q$ be coprime integers with $p > 0$. Fix a metric $g$ on $L(p,q)$ with $\Ric_g > 0$ (recall from above that this is always possible). Then $\calM_{H\geq 0}(p,q)$ is always path-connected.
\end{theo}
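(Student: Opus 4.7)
The plan is to combine Theorem~\ref{theo.mainone} with a bridging construction through a minimal Heegaard torus. There are two tasks: (i)~show that every element of $\calM_{H\geq 0}(p,q)$ is path-connected in $\calM_{H\geq 0}$ to some element of $\calM_{H>0}(p,q)$; (ii)~in the case $q\not\cong\pm 1\!\mod p$, join the two components of $\calM_{H>0}$ provided by Theorem~\ref{theo.mainone} through a path in $\calM_{H\geq 0}$.

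For (i), given $T\in\calM_{H\geq 0}$ with $H\not\equiv 0$, I would run mean curvature flow of $T$ for a short time. The scalar mean curvature evolves by $\partial_s H=\Delta H+|A|^2 H+\Ric(\nu,\nu)H$; combined with $\Ric_g\geq 0$, the ODE maximum principle preserves $H\geq 0$ so the flow stays in $\calM_{H\geq 0}$, while the parabolic strong maximum principle upgrades $H\geq 0$ to $H>0$ at any positive time, producing a path from $T$ into $\calM_{H>0}$. The only remaining case is a totally minimal Heegaard torus ($H\equiv 0$), which I handle together with (ii).

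For (ii), since $q\not\cong\pm 1\!\mod p$ and $\gcd(p,q)=1$ force $p\ne 0$, the lens space $L(p,q)$ has finite fundamental group and spherical universal cover. I would apply Simon--Smith/Colding--De~Lellis min-max theory to the natural genus-one Heegaard sweepout of $L(p,q)$ to produce a smooth embedded minimal Heegaard torus $T_0$. By the Marques--Neves index bound, $T_0$ has Morse index at most one. The index cannot be zero: by Cai--Galloway rigidity, a two-sided stable minimal torus in a three-manifold with $\Ric\geq 0$ is flat and totally geodesic and forces a local isometric splitting of the ambient metric as $T^2\times\RR$, whose propagation is incompatible with the finite fundamental group of $L(p,q)$ when $p\ne 0$. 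Hence $T_0$ has index exactly one, and the positive first Jacobi eigenfunction $\phi$ provides perturbations $T_0^{\pm}=\exp(\pm\eps\phi\nu)(T_0)$ with $H>0$ whose mean curvature vectors point in opposite directions. These lie in the two different components of $\calM_{H>0}(p,q)$, so the concatenation $T_0^-\to T_0\to T_0^+$ gives the desired bridge inside $\calM_{H\geq 0}$. The same unstable-eigenfunction perturbation disposes of any remaining totally minimal Heegaard torus when $p\ne 0$; the case $p=0$ is already in the path-connected regime of Theorem~\ref{theo.mainone}, and any stable minimal Heegaard torus there lies in a product region of $(L(0,1),g)$ forced by Cheeger--Gromoll splitting and can be perturbed into $\calM_{H>0}$ by sliding in the transverse direction.

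The main obstacle is the index-one control of the bridging minimal Heegaard torus in a general metric with only $\Ric\geq 0$: the upper bound demands a careful min-max construction targeting the correct topological type, and the lower bound rests on extending the Cai--Galloway local splitting into a genuine contradiction with the spherical topology, which is delicate because $\Ric\geq 0$ is borderline. A conceptually cleaner alternative would be to deform $g$ along three-dimensional Ricci flow, which preserves $\Ric\geq 0$ by Hamilton, to a canonical background metric admitting an explicit Clifford-torus bridge, and then transport the resulting path back along the flow to the original metric.
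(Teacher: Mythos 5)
Your overall architecture mirrors the paper's: (i)~run mean curvature flow to take any $H\geq 0$, $H\not\equiv 0$ torus instantaneously into $\calM_{H>0}(p,q)$ via the strong maximum principle applied to $\partial_t H=\Delta H+|A|^2H+\Ric(\nu,\nu)H$; (ii)~bridge the two components of $\calM_{H>0}$ through a minimal Heegaard torus, perturbed on each side by a first Jacobi eigenfunction. Step~(i) is exactly what the paper does, and the unstable-case perturbation in step~(ii) is also the paper's argument (they compute $H_{\Sigma_t}=t\lambda\varphi_\lambda+O(t^2)$ for $\Sigma_t=\exp(t\varphi_\lambda\nu)$, which is positive with opposite-pointing mean curvature vector on the two sides).

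The genuine gap in your proposal is exactly where you flag it: you need the bridging minimal torus to be \emph{unstable}, and you try to obtain this by proving Morse index exactly one via min-max upper bounds and a Cai--Galloway-type lower bound. Producing the minimal Heegaard torus with controlled topology and index is already delicate, and turning the local-splitting rigidity into a global contradiction ``is delicate because $\Ric\geq 0$ is borderline,'' as you say. The paper sidesteps both issues. For existence it simply cites Ketover--Liokumovich--Song, who produce a minimal Heegaard torus in every $L(p,q)\neq\Sph^3,\RR\mathbb{P}^3$, hence in every lens space with $q\not\cong\pm 1\mod p$ (the only case where the bridge is needed). For the stability issue, they do not rule out weak stability: if the top eigenvalue $\lambda$ of $L_\Sigma$ is zero, then since the potential $|A|^2+\Ric(\nu,\nu)\geq 0$ has a positive first eigenfunction, it must vanish identically, so $\Sigma$ is totally geodesic with $\Ric(\nu,\nu)\equiv 0$. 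They then translate normally, $\Sigma_t=\exp(t\nu)$: if $\Ric(\nu_t,\nu_t)\equiv 0$ for all $t>0$, the solid torus $V_1$ would split isometrically as $\Sigma\times[0,\infty)$, which is impossible because $V_1$ is compact and diffeomorphic to $D^2\times\Sph^1$. Hence some $\Sigma_{t_+}$ has a positive eigenvalue and the eigenfunction-perturbation applies there. This is a self-contained, elementary argument that makes the index computation you worry about unnecessary; it also works uniformly in both cases of Theorem~\ref{theo.mainone} rather than requiring your case split on $p=0$ versus $p\neq 0$. Your suggested Ricci-flow alternative is not developed enough to evaluate; transporting a mean-convex path from an evolved metric back to $g$ is nontrivial since $\calM_{H\geq 0}(p,q)$ depends on $g$.
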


We finish this introduction with an outline of the proofs of the above theorems and of the article.  

First, in Section \ref{sec.glue}, we show that every mean convex $2$-sphere in a complete, orientable Riemannian three-manifold with nonnegative Ricci curvature as in Theorem \ref{theo.sphere} bounds a mean convex domain (a compact codimension $0$ submanifold with mean curvature vector pointing inwards). This important step allows us to work with domains instead of surfaces in the following; it also highlights that the theorem is sharp in the sense described in Remark \ref{rem.theo1sharp}. We then continue by extending a gluing result of the first author with Haslhofer and Hershkovits \cite{BHH21} from Euclidean space to the ambient manifold setting, allowing us to connect mean convex domains along thin ``strings'' in a way that preserves mean convexity. This gluing result could be of independent interest. 

Next, in Section \ref{sec.MCF}, we recall the theory of mean curvature flow with surgery from Haslhofer-Kleiner \cite{HK17} as extended by Haslhofer-Ketover \cite{HK19} to the ambient manifold setting. In particular, for any mean convex sphere or Heegaard torus $\Sigma$, there is a mean curvature flow (MCF) with surgery starting at $\Sigma$ that becomes extinct or converges to a union of minimal surfaces. If the ambient manifold has nonnegative Ricci curvature, then the only possibility is finite time extinction.

In Section \ref{sec.BHH}, we show that any element of $\calM_{H>0}(\Sph^2,M)$ can be deformed to a \emph{marble tree} which then itself can be deformed to a small geodesic sphere. Similarly, any element of $\calM_{H>0}(p,q)$ can be deformed to a \emph{marble circuit} which then itself can be deformed further to the boundary of the $\varepsilon$-neighbourhood of a simple closed curve $T_\eps=\partial N_\varepsilon(\gamma)$. These results follow again the ideas of the first author with Haslhofer and Hershkovits \cite{BHH21,BHH19}, using the gluing construction of Section \ref{sec.glue} to undo the surgeries. 

The proofs of the three theorems above are then completed in Section \ref{sec.topology}. Theorem \ref{theo.sphere} follows immediately from the results in Section \ref{sec.BHH}. Theorem \ref{theo.mainone} requires further topological arguments. First, we show that $\calM_{H>0}(p,q)$ has at most two components. This uses critically Bonahon's uniqueness theorem~\cite{Bo83}: since the initial torus is Heegaard, the simple closed curve $\gamma$ from the above paragraph must be isotopic (up to orientation) to one of the cores $\gamma_1$ or $\gamma_2$. In the case where $q\cong \pm 1 \!\mod p$, path-connectedness then follows immediately from a Lemma of Bonahon that says that in this case $\gamma_1$ and $\gamma_2$ are isotopic. The second case is by contradiction: let $T_1$ and $T_2$ be the boundary of the $\varepsilon$-neighbourhood of $\gamma_1$ and $\gamma_2$, respectively. Then $T_1$ (respectively $T_2$) has positive mean curvature and the mean curvature vector points towards $\gamma_1$ (respectively $\gamma_2$). If they were in the same component of the moduli space, along the isotopy the mean curvature would point in the same direction. The key point is that ``same'' direction is well-defined because $\gamma_1$ is not homologous to $\gamma_2$. Finally, to prove Theorem \ref{theo.maintwo}, we show that every Heegaard torus with $H\geq 0$ is isotopic (via an isotopy preserving $H \geq 0$) to a strictly mean convex Heegaard torus. Therefore $\calM_{H\geq 0}(p,q)$ cannot have more path-components than $\calM_{H>0}(p,q)$. Moreover, in the case where $\calM_{H>0}(p,q)$ has two path components there exists a minimal Heegaard torus $\Sigma$ by \cite{KMN} and we prove that it is isotopic \emph{on each side} to a mean convex torus. The result follows.
\vspace{-2mm}

\paragraph{Acknowledgements.} RB thanks Gianmichele Di Matteo and Luciano Mari for interesting discussions. He has been partially supported by the EPSRC grant EP/S012907/1. SM thanks Giuseppe Pipoli for interesting discussions. He has been partially supported by Agence Nationale de la Recherche grant ANR-17-CE40-0034. The authors also thank the anonymous referee for their helpful comments, in particular regarding Theorem \ref{theo.maintwo}.


\section{Mean convex gluing in ambient three-manifolds}\label{sec.glue}

Here and in the following two sections, $(M,g)$ denotes a complete, orientable Riemannian three-manifold with nonnegative Ricci curvature $\Ric_g \geq 0$ (unless mentioned otherwise). Since $M$ is orientable, all orientable embedded surfaces in $M$ are two-sided. This applies in particular to spheres and tori.

A \emph{domain} in $M$ is a (possibly disconnected) smooth \emph{compact} codimension 0 submanifold with boundary. A domain $K$ is \emph{mean convex} if its boundary $\Sigma=\bord K$ satisfies $H\neq 0$ everywhere, with mean curvature vector pointing toward $K$. 

In the following, instead of working with the mean convex surfaces $\Sigma$ in $(M,g)$, it will be convenient to work with the mean convex domains bounded by them. In the situations relevant for us, this can always be done. For mean convex Heegaard tori in lens spaces this is easy. Indeed, they bound two solid tori and the condition of mean convexity means that the mean curvature vector always points to one of these two solid tori which will therefore be our mean convex domain. For mean convex spheres as in Theorem \ref{theo.sphere} this is less obvious; we therefore start with a proof of this fact.

\begin{prop}\label{prop.bounddomain}
Let $\Sigma$ be a mean convex embedded two-sphere in $(M,g)$. Then $\Sigma$ bounds a mean convex domain $\bar{K}$.
\end{prop}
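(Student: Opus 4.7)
The plan is to establish a Heintze--Karcher-type depth bound on mean convex regions in manifolds with $\Ric_g\ge 0$, and then deploy it twice: first to produce a compact interior component of $M\setminus\Sigma$ when $\Sigma$ separates, and second to rule out the non-separating case via an infinite-cyclic-cover argument.

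First I would prove the key quantitative bound. Let $\nu$ be the unit normal on $\Sigma$ for which $H>0$, so that $\vec H=-H\nu$ points to the ``interior'' side locally. For $p\in\Sigma$ consider the inner normal geodesic $\gamma_p(s)=\exp_p(-s\nu(p))$; while the parallel surface $\Sigma_s$ remains smooth, its mean curvature $H(s)$ satisfies the Riccati evolution $H'(s)=|A(s)|^2+\Ric(\nu,\nu)$. Combining $\Ric_g\ge 0$ with $|A|^2\ge H^2/2$ (Cauchy--Schwarz applied to the $2\times 2$ shape operator of the surface $\Sigma_s$) yields $H'(s)\ge\tfrac12 H(s)^2$, which forces $H(s)$ to blow up --- hence a focal point along $\gamma_p$ --- at some distance $s^*(p)\le 2/H(p)\le 2/H_{\min}$.

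Assume now that $\Sigma$ separates $M$, and let $\Omega^+$ be the component of $M\setminus\Sigma$ into which $\vec H$ points. For any $x\in\Omega^+$, let $\gamma$ realize $d(x,\Sigma)$; the first variation formula forces $\gamma$ to meet $\Sigma$ orthogonally, and minimality forces it both to avoid re-crossing $\Sigma$ and to carry no focal point. Reversed, $\gamma$ is an inner normal geodesic into $\Omega^+$, so the bound above gives $d(x,\Sigma)\le 2/H_{\min}$. Hence $\Omega^+\subset N_{2/H_{\min}}(\Sigma)$, which is relatively compact since $\Sigma$ is compact and $(M,g)$ is complete. The closure $\bar K:=\overline{\Omega^+}$ is then a smooth compact codimension-zero submanifold with boundary $\Sigma$ and inward-pointing mean curvature vector, i.e., the desired mean convex domain.

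To rule out the remaining possibility that $\Sigma$ does not separate $M$, I would argue by contradiction. Since $\Sigma\cong\Sph^2$ is orientable, non-separation forces $[\Sigma]\ne 0\in H_2(M;\ZZ)$, so there is an associated infinite-cyclic cover $\hat M\to M$ whose deck group $\ZZ$ acts by isometries and preserves $\Ric\ge 0$. The preimage of $\Sigma$ is a disjoint family $\{\hat\Sigma_k\}_{k\in\ZZ}$ of embedded spheres, each separating $\hat M$ by construction of the cover, and permuted by the deck group. The choice of $\nu$ lifts coherently, so after orienting the cover the interior side of $\hat\Sigma_0$ in $\hat M$ contains $\hat\Sigma_k$ for every $k\ge 1$, and these lie at distance $\ge k\cdot d(\hat\Sigma_0,\hat\Sigma_1)\to\infty$. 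But applying the preceding paragraph to $\hat\Sigma_0\subset\hat M$ forces this interior side to have bounded diameter, a contradiction. The main obstacle I anticipate is the topological bookkeeping in this final step: verifying that each $\hat\Sigma_k$ really separates $\hat M$ and that the mean convex direction lifts coherently so that $\hat\Sigma_1,\hat\Sigma_2,\dots$ all lie on the same side of $\hat\Sigma_0$; once that is properly arranged, the Riccati depth bound delivers the contradiction immediately.
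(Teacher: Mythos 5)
Your proof is correct, and it takes a genuinely different route from the one in the paper. The paper lifts $\Sigma$ to the universal cover and then invokes Liu's classification of complete orientable $3$-manifolds with $\Ric \ge 0$ (the cover is $\Sph^3$, $\RR^3$, or a metric product $\Sph^2 \times \RR$), treating each case separately via Alexander's theorem, Kasue's splitting/rigidity theorem, and a maximum-principle comparison with the totally geodesic slices $\Sph^2 \times \{c\}$. You replace all of that by a single quantitative Heintze--Karcher estimate: since $\Ric_g \ge 0$ and $|A|^2 \ge H^2/2$ for surfaces in a $3$-manifold, the Riccati inequality $H' \ge \tfrac12 H^2$ along inner normal geodesics forces a focal point within distance $2/H_{\min}$; minimizing geodesics from interior points to $\Sigma$ carry no focal point, so the side into which the mean curvature vector points has depth at most $2/H_{\min}$, and completeness (Hopf--Rinow) then yields compactness of its closure. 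The non-separating case is eliminated by running the same bound in the infinite-cyclic cover, where each lift $\hat\Sigma_k$ of $\Sigma$ does separate $\hat M$ but the mean convex side of $\hat\Sigma_0$ contains all $\hat\Sigma_k$, $k\ge 1$, at distance $\ge k\, d(\hat\Sigma_0,\hat\Sigma_1) \to \infty$. What this buys you: the argument is essentially self-contained (no Liu, no Alexander, no Kasue), and it nowhere uses that $\Sigma$ is a sphere --- it applies verbatim to any closed two-sided surface with $H > 0$, since loops on $\Sigma$ have zero intersection number with $\Sigma$, making each lift in the cyclic cover a compact homeomorphic copy of $\Sigma$. The only step worth writing out more carefully is the ``topological bookkeeping'' you flag yourself: that each $\hat\Sigma_k$ separates $\hat M$, that they are linearly ordered, and that $d(\hat\Sigma_0,\hat\Sigma_1) > 0$. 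All of this follows directly from the standard description of the infinite-cyclic cover as $\ZZ$ copies of $M$ cut along $\Sigma$, glued end to end, with the deck generator acting by shift isometries.
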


\begin{rem}\label{remark.bounddomain}
Note that the proposition is sharp in the sense that the condition $H>0$ cannot be weakened to $H \geq 0$, as seen by the counterexample $\Sigma = \Sph^2 \times \{0\}$ in $M=\Sph^2 \times \Sph^1$ (or $\Sph^2 \times \mathbb{R}$) with standard product metric. Similarly, the condition $\Ric_g \geq 0$ on $(M,g)$ cannot be weakened to $\Ric_g \geq -C$ for some $C>0$ (or replaced by $1 \geq \Ric_g \geq -\eps$ for some $\eps>0$). Also here the counterexample is $\Sigma = \Sph^2 \times \{0\}$ in $M=\Sph^2 \times \Sph^1$ (or $\Sph^2 \times \mathbb{R}$), this time with a warped product metric $g = w(x) g_{\Sph^2} + dx^2$ arbitrarily close to the standard product metric (i.e. with warping factor $w(x)$ close to $1$) and with $w'(0)\neq 0$ to guarantee that $\Sigma$ has $H>0$. Finally,  completeness of $M$ cannot be removed as seen by the counterexample $M=\Sph^3 \setminus \{p\}$ with $\Sigma$ a small geodesic two-sphere around $p$. The sphere $\Sigma$ bounds a compact domain only on one side, but the mean curvature vector points to the complement of this domain. All these counterexamples also work as counterexamples for path-connectedness of the moduli space $\calM_{H>0}(\Sph^2,M)$ in Theorem \ref{theo.sphere} if one of the assumptions is removed or weakened.
\end{rem}

\begin{proof}
Assuming towards a contradiction that there exists an embedded 2-sphere in $M$ with $H>0$ that does not bound a domain, we can lift it to the universal cover $\widetilde{M}$ (due to simply-connectedness). The lifted two-sphere is still two-sided, still has $H>0$, and still does not bound a domain. Hence we can work directly in the universal cover.

Since $M$ is complete, orientable, and has $\Ric_g \geq 0$, a theorem of Liu~\cite{Liu} implies that its universal cover $\widetilde{M}$ is either diffeomorphic to $\Sph^3$ or $\mathbb{R}^3$ or it splits isometrically as a Riemannian product $\widetilde{M} = \Sph^2 \times \mathbb{R}$, where the $\Sph^2$ factor has (possibly nonconstant) nonnegative curvature. We consider these cases separately.

\begin{case}
$\widetilde{M}$ is diffeomorphic to $\Sph^3$.
\end{case}

This is the easiest case. By Alexander's theorem~\cite{Alex}, $\Sigma$ bounds a 3-ball on either side. One of these 3-balls is a mean convex domain.

\begin{case}
$\widetilde{M}$ is diffeomorphic to $\mathbb{R}^3$.
\end{case}

Again, by Alexander's theorem~\cite{Alex}, $\Sigma$ bounds a 3-ball. We need to prove that the mean curvature vector points towards this ball. By Kasue's theorem \cite[Theorem C]{Kasue}, if $H \geq 0$ with respect to the \emph{outward pointing normal}, i.e. the mean curvature vector points to the complement of the ball, then this complement is isometric to a Riemannian product $\Sigma \times [0,\infty)$ and $\Sigma$ is totally geodesic, i.e. $H\equiv 0$. This shows that the mean curvature vector must point inwards. (See also Theorem 1.6. of \cite{AFM} for a strengthening of Kasue's theorem that yields a direct proof that the mean curvature vector points towards the ball in our case). As seen in the remark above, completeness of $\widetilde{M}$ is essential for this argument.

\begin{case}
$\widetilde{M}$ is a Riemannian product $\Sph^2 \times \mathbb{R}$ with nonnegatively curved $\Sph^2$ factor.
\end{case}

Let $\pi_2 : \widetilde{M} = \Sph^2 \times \mathbb{R} \to \mathbb{R}$ denote the projection to the second factor.

Assume $\Sigma$ does not bound a ball. Call $A^-$ and $A^+$ the two connected components of $(\Sph^2 \times \RR) \setminus \Sigma$. Both components are homeomorphic to $\Sph^2 \times \RR$ and adjacent to exactly one end. We assume that $p(A^+)$ (respectively $p(A^-)$) is adjacent to $+\infty$ (resp. $-\infty$). Since $H>0$, the mean curvature vector of $\Sigma$ always points to one of the two components. Without loss of generality, we may assume that it always points to $A^+$ (otherwise invert the orientation of $\RR$, swapping $A^-$, $A^+$).

Now consider a maximal point $p$ of the restricted projection $\pi_2 |_{\Sigma}$. On the one hand, by construction, the mean curvature vector of $\Sigma$ at $p$ points to $A^+$. On the other hand, by definition of $p$, $\Sigma$ lies on the ``lower'' side of the (minimal) two-sphere $\Sph^2 \times \{ \pi_2(p)\}$ and touches it at $p$. The maximum principle thus forces the mean curvature vector of $\Sigma$ at $p$ to either vanish or point to $A^-$, yielding the desired contradiction.

Hence $\Sigma$ bounds a ball $B$. Again looking at a maximum of the restriction of $\pi_2$ to $\Sigma$ and using the maximum principle we see that the mean curvature vector points toward $B$ at such a point. We conclude that $B$ is a mean convex domain.
\end{proof}

In what follows, we fix a compact domain $\bar{K}$ (which in our applications with noncompact $(M,g)$ will be the mean convex domain bounded by $\Sigma$) and only consider subsets of $\bar{K}$. In particular, everything below might depend on this choice of $\bar{K}$. By compactness, the curvature tensor and its derivatives are uniformly bounded over $\bar{K}$ and $\inj(\bar{K})>0$, where $\inj(\bar{K})$ denotes the infimum of the injectivity radius of $M$ at $p$ over all points $p\in \bar{K}$.

We denote by $\calD$ the set of all mean convex domains contained in $\bar{K}$. For such a domain $K\in \calD$, $p\in\bord K$, and $\alpha>0$, we denote by $\bar B^\pm_\alpha(p)$ the closed metric balls of radius $\alpha H(p)^{-1}$ with center $\exp_p(\pm\alpha H(p)^{-1}\nu(p))$ in $M$ where $\nu(p)$ is the unit normal vector to $\bord K$ at $p$ pointing towards $K$.

\begin{defi}[Quantitative noncollapsing, see \cite{SW09, An12, HK17, HK19}]\label{def.noncollapsed}
Let $\alpha>0$. We say that a mean convex domain $K \in \calD$ is \emph{$\alpha$-noncollapsed} if for every $p\in\bord K$ the following conditions hold:
\begin{enumerate}
\item $H(p)\ge 4\alpha \inj(\bar{K})^{-1}$;
\item  $\bar B^+_\alpha(p)\subset K$;
\item $\bar B^-_\alpha(p) \cap K \subset \bord K$.
\end{enumerate}  
\end{defi}

A \emph{curve} in $M$ is a (possibly disconnected) smooth compact $1$-submanifold with boundary. The set of curves contained in $\bar{K}$ is denoted by $\calC$. In this section, we will glue certain \emph{controlled} domains along certain \emph{controlled} curves in $\bar{K}$.

\begin{defi}[Controlled domains and curves, slight modification of \cite{BHH21}]\label{def.controlled}
Let $\bA=(\alpha,c)$ be a pair of positive numbers. We say that a domain in $\bar{K}$ is \emph{$\bA$-controlled} if it is mean convex with $H \geq c$, $\alpha$-noncollapsed, and satisfies $|A| + |\nabla A| \leq c^{-1}$. The set of $\bA$-controlled domains in $\bar{K}$ is denoted by $\calD_\bA$.

Let $\bb>0$. We say that a curve $\gamma$ in $\bar{K}$ is $\bb$-controlled if its curvature vector satisfies $|\kappa| \leq \bb^{-1}$ and $|\nabla \kappa| \leq \bb^{-2}$, each connected component of $\gamma$ has normal injectivity radius $\geq \bb/10$, and different connected components of $\gamma$ are at least $10\bb$ apart. Moreover, we require that the $\bb/10$-tubular neighbourhood of $\gamma$ is still contained in $\bar{K}$. We denote by $\calC_\bb$ the set of $\bb$-controlled curves.

An \emph{$(\bA,\bb)$-controlled configuration} is a pair $(K,\gamma) \in \calD_\bA \times \calC_\bb$ satisfying the following properties:
\begin{enumerate}
\item The interior of $\gamma$ is contained in the complement of $K$.
\item If $p\in \bord \gamma$, then $p\in \bord K$ and $\gamma$ meets $\bord K$ orthogonally at $p$.
\item $d(\gamma \setminus \bigcup_{p\in\bord\gamma} B_{\bb/10} (p), \bord K)\ge \bb/20$. 
\end{enumerate}
We denote the set of $(\bA,\bb)$-controlled configurations by $\calX_{\bA,\bb}$.
\end{defi}

We note that if $\bA'=(\alpha',c')$ is such that $\alpha'\leq\alpha$ and $c'\leq c$ then $\calD_\bA \subseteq \calD_{\bA'}$ and if $\bb'\leq \bb$ we have $\calC_\bb \subseteq \calC_{\bb'}$. In particular also $\calX_{\bA,\bb} \subseteq \calX_{\bA',\bb'}$.

The main result of this section is the following theorem which can be seen as an ambient manifold version of the gluing result from \cite[Theorem~4.1]{BHH21}.

\begin{theo}[Mean convex gluing]\label{thm.glue}
There exists a smooth gluing map
\begin{equation*}
\mathcal{G}:\calX_{\bA,\bb}\times (0,\bar{r})\to \calD,\quad ((K,\gamma),r)\mapsto \mathcal{G}_{r}(K,\gamma),
\end{equation*}
where $\bar{r}=\bar{r}(\bA,\bb,\bar{K},M,g)>0$ is a constant, and a smooth increasing function $\delta:(0,\bar{r})\rightarrow \mathbb{R}_+$ with $\lim_{r\rightarrow 0}\delta(r)=0$, with the following properties:
\begin{enumerate}
\item For every $(K,\gamma) \in \calX_{\bA,\bb}$ and every $0<r<\bar{r}$, the resulting domain $\mathcal{G}_{r}(K,\gamma)$ deformation retracts to $K\cup \gamma$.
\item The nontrivial part of the gluing only happens near the points $p\in\partial \gamma$ in the following sense:
\begin{equation*}
\mathcal{G}_{r}(K,\gamma)\setminus\bigcup_{p\in \bord \gamma} B_{2\delta(r)}(p) = K \cup  N_{r}(\gamma)\setminus \bigcup_{p\in \bord \gamma} B_{2\delta(r)}(p),
\end{equation*}
where $N_{r}(\gamma)$ denotes the solid $r$-tubular neighbourhood of $\gamma$.
\end{enumerate}
\end{theo}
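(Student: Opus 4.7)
The plan is to perform the gluing locally near each endpoint of $\gamma$ by transferring the Euclidean construction from \cite[Theorem~4.1]{BHH21} to the manifold via Riemannian normal coordinates. Away from the endpoints $p\in \bord\gamma$, the set $K\cup N_r(\gamma)$ is already a smooth mean convex domain for all $r<\bb/10$: the tube $N_r(\gamma)$ is embedded by the normal injectivity radius condition on $\gamma$, its boundary has mean curvature $r^{-1}+O(1)$ (where the $O(1)$ error is uniformly controlled by $\abs\kappa$, $\abs{\nabla\kappa}$, and the ambient curvature on $\bar K$), and by property (3) of controlled configurations this tube is disjoint from $\bord K$ outside a $\bb/10$-neighbourhood of $\bord\gamma$.

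The nontrivial work is therefore localised near each endpoint $p\in\bord\gamma$, where $\bord K$ meets $\bord N_r(\gamma)$ orthogonally in a codimension-two corner. I would fix $\delta(r)\searrow 0$ slowly with $r$ and work in the normal coordinate ball $B_{2\delta(r)}(p)$; since $\inj(\bar K)>0$, for $r$ small this chart exists and the metric $g$ is $C^3$-close to the Euclidean metric, with deviation bounded by $C\delta(r)^2$ on the ball of radius $\delta(r)$. In these coordinates the configuration looks (to leading order) exactly like a plane-cylinder orthogonal junction in $\RR^3$, which is precisely the model handled in \cite[Theorem~4.1]{BHH21}. I would import their rotationally symmetric smoothing to replace the Euclidean corner by a smooth hypersurface with Euclidean mean curvature bounded below by $c'/r$ for some $c'=c'(\bA,\bb)>0$, matching $\partial K$ and $\partial N_r(\gamma)$ smoothly outside $B_{\delta(r)}(p)$ and inside $B_{2\delta(r)}(p)$ respectively; the existence of such an interpolation hinges on the $\bA$-control of $\partial K$ and the $\bb$-control of $\gamma$, which give uniform $C^2$ graphical representations over the respective tangent models.

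Pulling this surface back to $M$ via $\exp_p$ produces a smooth hypersurface whose mean curvature with respect to $g$ differs from its Euclidean mean curvature by $O(\delta(r))$ error terms from the metric expansion. Since $c'/r$ dominates $O(\delta(r))$ once $r$ is small enough (depending only on $\bA,\bb,\bar K,M,g$), the glued hypersurface remains strictly mean convex in $(M,g)$, and we define $\mathcal{G}_r(K,\gamma)$ to be $K\cup N_r(\gamma)$ with each corner neighbourhood replaced by the corresponding cap. Property (2) of the theorem is immediate from the localisation, and property (1) follows because each cap deformation retracts onto the corner together with the stub of $\gamma$ inside $B_{2\delta(r)}(p)$, while the rest of $N_r(\gamma)$ retracts onto $\gamma$ along the fibres of the normal exponential map. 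Smoothness in $r$ of the gluing is inherited from the Euclidean model and the smooth dependence of normal coordinates on the base point.

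The main obstacle is quantitative: one has to choose $\bar r$ and the function $\delta(\cdot)$ so that simultaneously (i) the normal coordinate chart of size $2\delta(r)$ is well-defined and uniformly close to Euclidean; (ii) the Euclidean-mean-curvature lower bound $c'/r$ beats the ambient perturbation $O(\delta(r))$; and (iii) the gluing region stays disjoint from the rest of $\bord K\cup \bord N_r(\gamma)$, which is guaranteed by condition (3) in Definition \ref{def.controlled}. All three conditions reduce to scale comparisons between $r$, $\delta(r)$, $\bb$, and the geometry of $\bar K$, and are compatible because we only need mean convexity of the output (not full $\bA'$-control for some $\bA'$), so no quantitative curvature or noncollapsing estimate for $\mathcal{G}_r(K,\gamma)$ needs to be tracked through the smoothing—a simplification compared to the companion estimates in \cite{BHH21}.
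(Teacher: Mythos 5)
Your overall architecture matches the paper's: localize at each $p\in\bord\gamma$, pull back via $\exp_p$ to Euclidean space, apply \cite[Theorem~4.1]{BHH21}, and push forward; and your observation that you need not track full $\bA'$-control of the output (only mean convexity) is also the paper's implicit simplification. But there is a genuine gap in your quantitative argument, and it is exactly the one the paper flags as the subtle point.

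You assert that the glued hypersurface has Euclidean mean curvature bounded below by $c'/r$, and you beat the ambient-metric perturbation by invoking the hierarchy ``$c'/r \gg O(\delta(r))$.'' This lower bound is not correct. The BHH21 gluing has to match $\bord K$ smoothly at the outer edge of the surgery region, and $\bord K$ only carries the bound $H\ge c$ coming from $\bA$-control, independent of $r$. The honest statement (tracing through BHH21) is that the glued surface's Euclidean mean curvature is bounded below by a constant $\widehat c>0$ depending only on $(\bA,\bb)$, which stays bounded away from zero as $r\to 0$ but does \emph{not} blow up like $1/r$. The paper's remark that the argument is ``easy'' when $H$ is very large, and ``tricky'' at points of small (but positive) mean curvature, is exactly a warning against your line of reasoning. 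The fix is to compare the ambient error against the fixed constant $\widehat c$ rather than against $1/r$, and to make the normal-coordinate ball small enough relative to $\widehat c$, $c$, $\alpha$, and $\bb$; this forces one to quantify precisely how $H$, $A$, $\nabla A$, $\alpha$-noncollapsedness, and the $\bb$-control of $\gamma$ transform under $\exp_p$ — which is the content of the dedicated technical Lemma~\ref{lemma.glue}, using the normal-coordinate expansion $\widetilde g_{ij}=\delta_{ij}-\tfrac13 R_{ikjl}x^kx^l+O(|x|^3)$.

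Related to this, your proposal does not actually verify the hypotheses needed to invoke BHH21's Theorem~4.1. That theorem is stated for $(\bA',\bb')$-controlled configurations \emph{in the Euclidean sense}, while your input is controlled in $(M,g)$. The phrase ``uniform $C^2$ graphical representations over the respective tangent models'' gestures at the right idea, but what is really needed is the precise transfer of the controlled-configuration bounds — $H\ge \tfrac{9}{10}c$, $|A|+|\nabla A|\le\tfrac{10}{9}c^{-1}$, $\tfrac{9}{10}\alpha$-noncollapsedness, and $\tfrac{9}{10}\bb$-control of the curve — on a ball whose radius depends on those parameters and the curvature of $(M,g)$ on $\bar K$, as in Points~\ref{lemma.p1} and~\ref{lemma.p2} of Lemma~\ref{lemma.glue}. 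Without this lemma, one cannot legitimately apply the Euclidean gluing to the pulled-back data, and the extra interpolation step between $N_r(\widehat\gamma)$ and $\exp_p^{-1}(N_r(\gamma))$ in the annulus $B_{2\delta(r)}(0)\setminus B_{\delta(r)}(0)$ (this is where high mean curvature \emph{is} legitimately available) is also missing from your outline.
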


We will often refer to the regions $N_{r}(\gamma)$ as \emph{strings} and call $r$ the \emph{string radius}.

\begin{rem}
For simplicity, we only state and prove the theorem for (uniformly) mean convex domains in ambient three-manifolds, but with some very minor modifications of the proof, one obtains the same result in all dimensions for (uniformly) two-convex domains. Moreover, the result in \cite{BHH21} also allows $\gamma$ to have ``loose end points'', that is, points $p\in \bord\gamma \setminus \bord K$, but we omit them here for simplicity. In fact, in \cite{BHH21}, these loose end points only come up in the ``marble reduction argument'', and while such an argument is also present in our proof of Theorem \ref{theo.isotopy2} below, we can simply pass to the Euclidean setting via the exponential map, as this argument can be performed at arbitrarily small scales where ambient curvature becomes negligible.
\end{rem}

The idea of the proof is as follows: If $p\in \bord\gamma \cap \bord K$, we lift the domain $K$ and curve $\gamma$ in a neighbourhood of $p$ to the tangent space via the exponential map, use the gluing construction from Theorem 4.1 in \cite{BHH21} in Euclidean $\RR^3$, and then use the exponential map again to obtain a domain in $M$. The resulting domain is not yet exactly what we want, as the ``string part'' will be the image (under the exponential map) of a tubular neighbourhood in Euclidean space rather than a tubular neighbourhood of $\gamma$ in $(M,g)$, thus requiring a further modification in the annular regions $B_{2\delta(r)}(p) \setminus B_{\delta(r)}(p)$. All of this is relatively easy if the mean curvature is very large (in which case the errors caused by the ambient curvature of $(M,g)$ will essentially be irrelevant), but it becomes more tricky when gluing at points of ``small'' (but positive) mean curvature. To deal with this problem carefully, we will need the following technical lemma.

Recall that for each $\varrho<\inj(M,p)$, the exponential map $\exp_p: T_pM \simeq \mathbb{R}^3 \to M$ is a diffeomorphism from the Euclidean $\varrho$-ball $B_\varrho(0) \subset \mathbb{R}^3$ to the geodesic $\varrho$-ball $B_\varrho(p) \subset M$ (where $0$ is mapped to $p$).\footnote{More precisely, after picking an oriented orthonormal basis $\{E_1,E_2,E_3\}$ of $T_p M$ at $p$, we identify $\mathbb{R}^3$ with $T_pM$ via the linear isometry $\iota:\mathbb{R}^3 \to T_pM$ given by $\iota(x^1,x^2,x^3)=\sum_{k=1}^3 x^kE_k$.}

\begin{lemma}\label{lemma.glue}
Let $p\in \bar{K}$ and $\varrho<\inj(\bar{K})$ be fixed. Let $S\subset B_\varrho(0)\subset \mathbb{R}^3$ be a two-sided surface and $\Sigma =\exp_p(S) \subset B_\varrho(p)\subset M$ its image under the above diffeomorphism and let these surfaces be endowed with normal vector fields pointing ``in the same direction''. We denote by $A^S$ and $H^S$ the second fundamental form and mean curvature of $S$ in $\mathbb{R}^3$ and by $A^\Sigma$ and $H^\Sigma$ the second fundamental form and mean curvature of $\Sigma$ in $(M,g)$ with respect to these normal vector fields. Then the following holds.
\begin{enumerate}
\item\label{lemma.p1} If $H^\Sigma \geq c_\Sigma>0$, $|A^\Sigma| + |\nabla A^\Sigma| \leq c_\Sigma^{-1}$, and $\Sigma$ is $\alpha$-noncollapsed everywhere in $B_\varrho(p)$, then there is $0<\delta_1\leq \varrho$ depending on the curvature of $(M,g)$ in $p$ and on $c_\Sigma$ and $\alpha$, such that $S \cap B_{\delta_1}(0)$ satisfies $H^S \geq \frac{9}{10}c_\Sigma$, $|A^S| + |\nabla A^S| \leq \frac{10}{9}c_\Sigma^{-1}$, and is $\frac{9}{10}\alpha$-noncollapsed.
\item\label{lemma.p2} If $\gamma \subset B_\varrho(p)$ is a $\bb$-controlled curve, then there is $0<\delta_2 \leq \varrho$ depending on the curvature of $(M,g)$ in $p$ and on $\bb$, such that $\exp_p^{-1}(\gamma) \cap B_{\delta_2}(0)$ is $\frac{9}{10}\bb$-controlled.
\item\label{lemma.p3} If $H^S \geq c_S>0$ everywhere in $B_\varrho(0)$, then there is $0<\delta_3\leq \varrho$ depending on the curvature of $(M,g)$ in $p$ and on $c_S$, such that $\Sigma \cap B_{\delta_3}(p)$ has $H^\Sigma \geq \frac{9}{10}c_S$.
\end{enumerate}
\end{lemma}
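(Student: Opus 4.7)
The overall strategy is to work in geodesic normal coordinates centered at $p$, where $\exp_p^\ast g$ is a small perturbation of the Euclidean metric. In such coordinates one has the standard expansion
\begin{equation*}
g_{ij}(x) = \delta_{ij} - \tfrac{1}{3} R_{ikjl}(0)\, x^k x^l + O(|x|^3),
\end{equation*}
so that the Christoffel symbols of $\exp_p^\ast g$ satisfy $\Gamma^k_{ij}(x) = O(|x| \cdot \|\Rm\|_p)$ and derivatives of $\Gamma$ are controlled by curvature data at $p$. All implicit constants can be taken uniform in $p \in \bar K$ by compactness and depend only on bounds for $\Rm$ and $\nabla \Rm$. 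In what follows I would fix $\varrho < \inj(\bar K)$ and choose each $\delta_i$ small relative to the curvature scale at $p$ and to the relevant lower bound ($c_\Sigma$, $\bb$, or $c_S$).

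For part \eqref{lemma.p1}, I would write the second fundamental form and mean curvature of $\Sigma$ with respect to $g$ as those of $S$ with respect to the Euclidean metric plus correction terms built from $\Gamma$ and its derivatives. Since these corrections vanish to positive order at $0$, on $B_{\delta_1}(0)$ one has $|H^S - H^\Sigma| + |A^S - A^\Sigma| + |\nabla A^S - \nabla A^\Sigma| \leq \eta(\delta_1)$ with $\eta(\delta_1) \to 0$. Choosing $\delta_1$ so that $\eta(\delta_1) \leq c_\Sigma/10$ gives the claimed bounds on $H^S$ and $|A^S| + |\nabla A^S|$. For the noncollapsing claim, I would use that for $q \in S \cap B_{\delta_1}(0)$ and $p' = \exp_p(q)$, the Euclidean ball of radius $\tfrac{9}{10}\alpha/H^S(q)$ centered at $q \pm \tfrac{9}{10}\alpha H^S(q)^{-1}\nu^S(q)$ is mapped by $\exp_p$ into the geodesic ball $\bar B^\pm_{\alpha}(p')$, since metric and Euclidean balls of a given radius differ by $O(r^3 \|\Rm\|)$ and since the normal direction to $S$ at $q$ agrees with the normal to $\Sigma$ at $p'$ up to $O(|q|^2 \|\Rm\|)$. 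The side conditions then transfer directly from the $\alpha$-noncollapsing of $\Sigma$.

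For part \eqref{lemma.p2}, I would write the ambient curvature vector of $\gamma$ as the Euclidean curvature vector of $\exp_p^{-1}(\gamma)$ plus a Christoffel correction, and similarly for $\nabla \kappa$; the resulting error tends to zero with $\delta_2$. Normal injectivity radii and pairwise distances between components transfer with metric distortion $1 + O(|x|^2 \|\Rm\|)$, so shrinking $\delta_2$ brings all the required constants within $10\%$ of their ambient values. Part \eqref{lemma.p3} is the same comparison as part \eqref{lemma.p1} run in the opposite direction: the mean curvature estimate $|H^\Sigma - H^S| \leq \eta(\delta_3)$ on $B_{\delta_3}(0)$ gives $H^\Sigma \geq \tfrac{9}{10} c_S$ once $\delta_3$ is chosen to make $\eta(\delta_3) \leq c_S/10$.

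The main technical nuisance is the noncollapsing part of \eqref{lemma.p1}: the balls $\bar B^\pm_\alpha(p')$ have radius bounded below by $\alpha c_\Sigma$ but not by $\delta_1$, so they can extend well beyond the small neighbourhood on which the mean curvature comparison was carried out. The fix is to first choose a larger radius $\varrho' \leq \inj(\bar K)$ that accommodates all these noncollapsing balls, verify metric/Euclidean ball comparison on that larger scale (using only bounds for $\Rm$), and only then pick $\delta_1 \leq \varrho'$ small enough to control $H^S$ and the normal direction tightly. Apart from keeping careful track of the dependencies of each $\delta_i$ on $\alpha$, $c_\Sigma$, $c_S$, $\bb$, and the local curvature, the argument is a routine perturbation computation in normal coordinates.
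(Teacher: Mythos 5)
Your proposal follows essentially the same approach as the paper's proof: pull back the metric via $\exp_p$, use the normal coordinate expansion $\widetilde g_{ij}(x)=\delta_{ij}-\tfrac13 R_{ikjl}x^kx^l+O(|x|^3)$, and derive a perturbation estimate of the form $\widetilde h_{\alpha\beta}=h_{\alpha\beta}+O_p(|x|^2)$ and $\widetilde H = H + O_p(|x|^2)$ (plus the analogous statements for $\nabla A$ and $\kappa$), then transfer the tangent-ball conditions by comparing radii, centers, and normals. The noncollapsing subtlety you flag is genuine and is treated just as informally in the paper -- the paper also simply asserts that ``for sufficiently small $\delta$'' the Euclidean tangent balls are contained in the $\widetilde g$-tangent balls, without unpacking that those balls have radius $\sim\alpha/H$, which does \emph{not} shrink with $\delta_1$; your proposed resolution (check the metric comparison at the larger scale occupied by the tangent balls) is the right idea, with the caveat that the resulting noncollapsing constant then depends on $\varrho^2\,\|\Rm\|_{\bar K}$ being controlled rather than on $\delta_1\to 0$, which is implicit in the paper's phrase ``depending on the curvature of $(M,g)$ in $p$.''
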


\begin{proof}[Proof of Theorem \ref{thm.glue} (assuming Lemma \ref{lemma.glue})]
As the construction is local, we first explain the gluing around one point $p\in \partial\gamma \cap \partial K$, making sure no constants will depend on the choice of this $p$ (but possibly on the set $\bar{K}$). First, set $\varrho=\frac{1}{2}\min\{\inj(\bar{K}),\min_{q_1,q_2\in\partial\gamma}d(q_1,q_2)\}$, guaranteeing that $\exp_p$ restricted to $B_\varrho(0) \subset \mathbb{R}^3$ is a diffeomorphism and $p$ is the only gluing point contained in $B_\varrho(p)$. We denote $\Sigma=\partial K \cap B_\varrho(p)$ and $S=\exp_p^{-1}(\Sigma) \subset B_\varrho(0)\subset\mathbb{R}^3$. 

By assumption, $K$ is $\bA=(\alpha,c)$-controlled and $\gamma$ is $\bb$-controlled. We can therefore let $\delta_1$ be as in Point \ref{lemma.p1} of Lemma \ref{lemma.glue} with $c_\Sigma=c$, and $\delta_2$ as in Point \ref{lemma.p2} of Lemma \ref{lemma.glue}. This ensures that for $\delta_0=\min\{\delta_1,\delta_2\}$ the surface $S\cap B_{\delta_0}(0)$ has $H^S \geq \frac{9}{10}c$, $|A^S| + |\nabla A^S| \leq \frac{10}{9}c^{-1}$ and is $\frac{9}{10}\alpha$-noncollapsed and moreover $\widehat\gamma = \exp_p^{-1}(\gamma) \cap B_{\delta_0}(0)$ is $\frac{9}{10}\bb$-controlled. We can therefore apply the Euclidean gluing result from Theorem 4.1 of \cite{BHH21} with these constants after extending $\exp_p^{-1}(K) \cap B_{\delta_0}(0)$ to some $\bA'=(\frac{9}{10}\alpha,\frac{9}{10}c)$-controlled domain $\widehat{K}$ in $\mathbb{R}^3$ that satisfies $\partial \widehat{K} \cap B_{\delta_0}(0) = S$. This yields \emph{for every sufficiently small $r<\bar{r}$} a mean convex domain $\mathcal{G}_{r}(\widehat K,\widehat\gamma)$ in $\mathbb{R}^3$ such that
\begin{equation*}
\mathcal{G}_{r}(\widehat K,\widehat \gamma)\setminus B_{\delta(r)}(0) = \widehat K \cup  N_{r}(\widehat \gamma)\setminus B_{\delta(r)}(0),\end{equation*}
where $\delta(r)$ and $\bar{r}$ are given by Theorem 4.1 in \cite{BHH21}. Moreover, by possibly further decreasing $\bar r$ we may also assume that $\delta(r) < \delta_0$. Tracing through the proof, we see that in fact $\mathcal{G}_{r}(\widehat K,\widehat\gamma)$ is not only mean convex, but has an explicit lower bound $\widehat c(r)>0$ for the mean curvature which only depends on $(\bA',\bb')$ (or equivalently on $(\bA,\bb)$) and on $r$ and does \emph{not} degenerate as $r\to 0$, hence $\inf_{r\in(0,\bar r)} \widehat c(r) =: \widehat{c}>0$. We will also make sure that $\bar r$ is small enough so that $\delta(r) < \frac{1}{4}\delta_3$, where $\delta_3>0$ is from Point \ref{lemma.p3} of Lemma \ref{lemma.glue} with $c_S=\widehat{c}$.

The region created this way is not yet exactly what we want, but the last adjustment is rather easy. For sufficiently small $\bar{r}$ (depending only on $\bb$), both $N_{r}(\widehat \gamma)$ and $\exp_p^{-1}(N_r(\gamma))$ have very large mean curvature, say at least $100\widehat{c}$. We can therefore easily interpolate from one region to the other in $B_{2\delta(r)}(0) \setminus B_{\delta(r)}(0)$ with the resulting region agreeing with $N_{r}(\widehat \gamma)$ on $B_{\frac{5}{4}\delta(r)}(0) \setminus B_{\delta(r)}(0)$ and with $\exp_p^{-1}(N_r(\gamma))$ on $B_{2\delta(r)}(0) \setminus B_{\frac{7}{4}\delta(r)}(0)$ and still having $\widehat{c}$ as lower bound for the mean curvature everywhere.

Applying $\exp_p$ in $B_{\delta_3}(0)$, by Point \ref{lemma.p3} of Lemma \ref{lemma.glue}, we obtain the desired $\mathcal{G}_{r}(K,\gamma) \cap B_{\delta_3}(p)$, which will have $\frac{9}{10}\widehat c$ as a lower bound for its mean curvature and agree with $K \cup N_r(\gamma)$ outside $B_{2\delta(r)}(p)$. Finally, we note that if $\bar{r}$ is small enough so that $\bar{r}, \delta(\bar{r}) < \bb/10$, then the resulting domain is still contained in $\bar{K}$. This finishes the proof.
\end{proof}

It remains to prove the technical lemma. The idea is that \emph{at} $p$ the second fundamental form and mean curvatures $A^S$ and $H^S$ of $S$ in $\mathbb{R}^3$ and $A^\Sigma$ and $H^\Sigma$ of $\Sigma$ in $(M,g)$ agree and from there the difference between the two values will grow by a function of the distance to $p$ -- similarly for the curvature of curves. Hence restricting to sufficiently small balls, the errors will be small enough to conclude the claims of the lemma.

\begin{proof}[Proof of Lemma \ref{lemma.glue}]
Let us first focus on the Points \ref{lemma.p1} and \ref{lemma.p3} concerned with the second fundamental form and mean curvature of the two surfaces.

We use the diffeomorphism $\exp_p: \mathbb{R}^3 \supset B_\varrho(0) \to B_\varrho(p) \subset M$ (after identifying $T_pM$ with $\mathbb{R}^3$ via a linear isometry) to pull back the metric $g$ on $M$ to $\mathbb{R}^3$, denoting the resulting metric with $\widetilde{g} = \exp_p^*g$. This way, $\exp_p:(B_\varrho(0), \widetilde{g}) \to (B_\varrho(p),g)$ is a Riemannian isometry.

We can then phrase the lemma in terms of the two different metrics on $B_\varrho(0)\subset\mathbb{R}^3$, proving that in sufficiently small balls all quantities we are interested in remain comparable in these different metrics. In particular, for a two-sided surface $S\subset B_r(0)\subset \mathbb{R}^3$, we denote by $A=h_{\alpha\beta}$ and $H$ its second fundamental form and mean curvature in $\mathbb{R}^3$ with the Euclidean metric $g_e$, and by $\widetilde{A}=\widetilde{h}_{\alpha\beta}$ and $\widetilde{H}$ its second fundamental form and mean curvature in $\mathbb{R}^3$ with respect to the metric $\widetilde{g}$. We then want to prove the following.
\begin{claim}\label{claim1}
$\widetilde{h}_{\alpha\beta}(q) = h_{\alpha\beta}(q) + O_p(\delta^2)$ and $\widetilde{H}(q)=H(q)+O_p(\delta^2)$ for all $q \in S \cap B_{\delta}(0)$, where the errors $O_p(\delta^2)$ depend on the curvature of $M$ and all its derivatives at the point $p\in \bar{K}$. 
\end{claim}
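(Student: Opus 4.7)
The plan is to work in the Riemannian normal coordinates for $\widetilde g = \exp_p^* g$ provided by the chart on $B_\varrho(0) \subset \RR^3$: since $\exp_p$ is the exponential map of $g$ at $p$, the classical expansion
\begin{equation*}
\widetilde g_{ij}(x) = \delta_{ij} - \tfrac{1}{3} R_{ikjl}(p)\, x^k x^l + O_p(|x|^3)
\end{equation*}
holds, and restricting to $B_\delta(0)$ yields the uniform bounds
\begin{equation*}
\widetilde g_{ij}(x) - \delta_{ij} = O_p(\delta^2), \qquad \widetilde\Gamma^k_{ij}(x) = O_p(\delta),
\end{equation*}
with constants depending only on $|\Rm|$ and its derivatives at $p$, and thus uniform over $\bar K$ by compactness. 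In particular $\widetilde g_{ij}(0) = \delta_{ij}$, the first derivatives of $\widetilde g$ vanish at $0$, and $\widetilde\Gamma^k_{ij}(0)=0$. Analogous Taylor expansions control all higher derivatives that will be needed later.

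The comparison itself I would carry out in a common local parametrization $\mathbf f\colon U\subset\RR^2 \to B_\varrho(0)$ of $S$ near $q$, using the standard formulas
\begin{equation*}
h_{\alpha\beta}=\langle\partial_\alpha\partial_\beta\mathbf f,\, \nu\rangle_e, \qquad \widetilde h_{\alpha\beta}=\widetilde g\bigl(\partial_\alpha\partial_\beta\mathbf f + \widetilde\Gamma^k_{ij}(\mathbf f)\, \partial_\alpha f^i\partial_\beta f^j\, \partial_k,\, \widetilde\nu\bigr).
\end{equation*}
Three sources of error arise when comparing these term by term on $B_\delta(0)$: (i) the inner-product discrepancy $\widetilde g - g_e$ evaluated on bounded vectors, of order $O_p(\delta^2)$; (ii) the normal-vector discrepancy $\widetilde\nu - \nu$, also $O_p(\delta^2)$ by a direct perturbation of the orthogonality and unit-length conditions that define each normal, since these conditions are the same algebraic equations applied to metrics agreeing to order $\delta^2$; and (iii) the Christoffel correction, whose pointwise size is $O_p(\delta)$. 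Contracting with $\widetilde g^{\alpha\beta} = \delta^{\alpha\beta} + O_p(\delta^2)$ then passes the resulting estimate from $\widetilde h_{\alpha\beta} - h_{\alpha\beta}$ to $\widetilde H - H$.

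The hard part is squeezing the Christoffel contribution down to $O_p(\delta^2)$, since a naive term-by-term estimate only gives $O_p(\delta)$. To upgrade, one uses $\widetilde\Gamma^k_{ij}(0)=0$ together with the explicit first-order expansion of $\widetilde\Gamma^k_{ij}$ in terms of the Riemann tensor at $p$, and regroups the linear-in-$\mathbf f$ part of the Christoffel correction with the corresponding linear-in-$\mathbf f$ perturbations of $\widetilde\nu$ and of the tangent-plane projection, aiming to identify the combination as a genuine $O_p(\delta^2)$ remainder. This combinatorial bookkeeping --- together with the verification that the error constants remain uniform as $p$ varies in $\bar K$ --- is the main technical obstacle; in any case, even the plain $O_p(\delta)$ bound that falls out of the direct term-by-term estimate is amply sufficient for the uses of Claim \ref{claim1} in the rest of Lemma \ref{lemma.glue}, where the $\tfrac{9}{10}$ constants are robust to such errors for $\delta$ small enough.
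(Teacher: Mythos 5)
Your proposal is essentially the same as the paper's: pull back $g$ via $\exp_p$, use the normal-coordinate expansion $\widetilde{g}_{ij}=\delta_{ij}-\tfrac{1}{3}R_{ikjl}x^kx^l+O_p(|x|^3)$, compare first fundamental forms and normals, and then compare second fundamental forms term by term; like the paper, you do not complete the Christoffel bookkeeping (the paper defers this ``rather lengthy calculation'' to \cite{DiMM}). Your added observation that the plain $O_p(\delta)$ bound coming from $\widetilde{\Gamma}^k_{ij}=O(|x|)$ already suffices for every use of Claim~\ref{claim1} inside Lemma~\ref{lemma.glue} is correct and worth making explicit, since those uses only require the errors to vanish as $\delta\to 0$. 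One small caveat: the cancellation mechanism you sketch for upgrading to $O_p(\delta^2)$ --- regrouping the linear-in-$\mathbf{f}$ Christoffel contribution against ``the corresponding linear-in-$\mathbf{f}$ perturbations of $\widetilde{\nu}$'' --- cannot work as stated, since by your own estimate $\widetilde{\nu}-\nu=O_p(\delta^2)$ has no linear part; if a genuine $O_p(\delta^2)$ bound holds, the cancellation must come from the algebraic symmetries of the Riemann tensor after contraction with $\theta_\alpha$, $\theta_\beta$, $\nu$, not from the normal correction.
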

From this claim, the statement about the mean curvature in Point \ref{lemma.p1} of the lemma follows immediately by picking $\delta_1$ sufficiently small so that $O_p(\delta_1^2) \leq \tfrac{1}{10}c_\Sigma$, while Point \ref{lemma.p3} follows by picking $\delta_3$ small enough so that $O_p(\delta_3^2) \geq -\tfrac{1}{10}c_S$. Note that due to the compactness of $\bar{K}$, this can be done independently of the choice of $p$.

\begin{proof}[Proof of Claim \ref{claim1}]
It is well known that our choice of (normal) coordinates implies that for all $x\in B_\varrho(0)\subset\mathbb{R}^3$, we have
\begin{equation}\label{eq.normalexp}
\widetilde{g}_{ij}(x)= \delta_{ij}-\tfrac{1}{3} R_{i k j \ell}\; x^k x^{\ell}+ O(\abs{x}^3),
\end{equation}
see for example \cite{Wil96}. We note that the expression $O(\abs{x}^3)$ depends on the curvature in $p$ and all its derivatives. In the following, we normally drop dependence on $x\in \mathbb{R}^3$ where obvious to keep notation shorter and we also do not always explicitly keep track of the quadratic terms. 

Parametrise $S$ by $\theta=(\theta^1,\theta^2,\theta^3): U \to \mathbb{R}^3$, where $U\subset \mathbb{R}^2$. Using coordinates $z^1,z^2$ in $U\subset\mathbb{R}^2$, we have $\theta^i=\theta^i(z^1,z^2)$, for $i=1,2,3$. We denote by $\theta_\alpha = (\theta^1_\alpha, \theta^2_\alpha, \theta^3_\alpha)$ the tangent vector $\frac{\partial}{\partial z^\alpha} \theta$, $\alpha=1,2$. In particular, we have the first fundamental form of $S$ in $(\mathbb{R}^3,g_e)$ given by
\begin{equation*}
g^S_{\alpha\beta}=g_e(\theta_\alpha,\theta_\beta)= \delta_{ij}\theta_\alpha^i \theta_\beta^j,
\end{equation*}
where here and in the following calculations indices such as $i, j, k, \ell$ are in $\{1,2,3\}$ and Greek indices like $\alpha,\beta$ are in $\{1,2\}$ and we use the Einstein summation convention. Working instead in the metric $\widetilde{g}$ on $\mathbb{R}^3$, we find the first fundamental form of $S$ at a point $x$ in $(\mathbb{R}^3,\widetilde{g})$ using \eqref{eq.normalexp},
\begin{equation*}
\widetilde{g}^S_{\alpha\beta}=\widetilde{g}(\theta_\alpha,\theta_\beta)= \widetilde{g}_{ij}\theta_\alpha^i \theta_\beta^j = g^S_{\alpha\beta} -\tfrac{1}{3} R_{i k j \ell}\; x^k x^{\ell} \theta_\alpha^i \theta_\beta^j + O(\abs{x}^3).
\end{equation*}
We point out that here and in the following, the terms $O(\abs{x}^k)$ might depend on the vectors $\theta_1$, $\theta_2$ up to at most quadratic order. The inverses also satisfy a similar relation. For us, it is not necessary to have an explicit expression for the terms that are quadratic in $x$, but it is sufficient to note that
\begin{equation}\label{eq.inversemetric}
(\widetilde{g}^S)^{\alpha\beta}=(g^S)^{\alpha\beta} + O(\abs{x}^2).
\end{equation}
The unit normal vector to $S$ for the Euclidean background metric is given by $\nu=(\theta_1 \wedge \theta_2) / \norm{\theta_1 \wedge \theta_2}$. We note that for sufficiently small $\varrho$, the unit normal vector $\widetilde{\nu}$ with respect to the background metric $\widetilde{g}$ lies very close to $\nu$, and we can therefore make the ansatz $\widetilde{\mu}=\nu+a^\alpha \theta_\alpha$ to obtain a (not yet normalised) normal vector. Indeed, the condition that $\widetilde{\mu}$ is normal to $\theta_\beta$ with respect to the metric $\widetilde{g}$ yields
\begin{equation*}
0=\widetilde{g}(\widetilde{\mu},\theta_\beta) = \widetilde{g}(\nu,\theta_\beta) + a^\alpha \widetilde{g}(\theta_\alpha,\theta_\beta) = -\tfrac{1}{3} R_{i k j \ell}\; x^k x^{\ell} \nu^i \theta^j_\beta + O(\abs{x}^3) + a^\alpha \widetilde{g}^S_{\alpha\beta},
\end{equation*}
where in the last step we used the expansion \eqref{eq.normalexp} as well as $g_e(\nu,\theta_\beta)=0$. Solving for $a^\alpha$, we find
\begin{equation*}
a^\alpha = (\widetilde{g}^S)^{\alpha\beta}\Big(\tfrac{1}{3} R_{i k j \ell}\; x^k x^{\ell} \nu^i \theta^j_\beta + O(\abs{x}^3)\Big) =  (g^S)^{\alpha\beta} \Big(\tfrac{1}{3} R_{i k j \ell}\; x^k x^{\ell} \nu^i \theta^j_\beta + O(\abs{x}^3)\Big).
\end{equation*}
In order to normalise $\widetilde{\mu}$, we note that
\begin{align*}
\norm{\widetilde{\mu}}_{\widetilde{g}}^2 &= \widetilde{g}(\widetilde{\mu},\widetilde{\mu}) = \widetilde{g}(\widetilde{\mu},\nu+a^\beta\theta_\beta) = \widetilde{g}(\widetilde{\mu},\nu) =\widetilde{g}(\nu,\nu)+a^\beta \widetilde{g}(\theta_\beta,\nu)\\
&=\norm{\nu}^2_{g_e} - \tfrac{1}{3} R_{i k j \ell}\; x^k x^{\ell} \nu^i \nu^j + O(\abs{x}^3) + a^\beta\big(g_e(\theta_\beta,\nu) -\tfrac{1}{3} R_{i k j \ell}\; x^k x^{\ell} \theta_\beta^i \nu^j + O(\abs{x}^3)\Big)\\
&= 1 - \tfrac{1}{3} R_{i k j \ell}\; x^k x^{\ell} \nu^i \nu^j + O(\abs{x}^3).
\end{align*}
On the first line of this calculation, we used that $\widetilde{g}(\widetilde{\mu},\theta_\beta)=0$, for the second line we used the expansion \eqref{eq.normalexp}, and finally in the very last step we used $\norm{\nu}^2_{g_e}=1$, $g_e(\theta_\beta,\nu)=0$, and the fact that $a^\alpha$ grows quadratically in $x$. Therefore 
\begin{equation}\label{eq.inverseM}
\norm{\widetilde{\mu}}_{\widetilde{g}}^{-1} = 1 + \tfrac{1}{6} R_{i k j \ell}\; x^k x^{\ell} \nu^i \nu^j + O(\abs{x}^3)
\end{equation} 
and we find for the unit normal vector 
\begin{equation*}
\widetilde{\nu} =\widetilde{\mu} / \norm{\widetilde{\mu}}_{\widetilde{g}} = \Big(\nu + \tfrac{1}{3} (g^S)^{\alpha\beta} R_{i k j \ell}\; x^k x^{\ell} \nu^i \theta^j_\beta\;\theta_\alpha + O(\abs{x}^3)\Big)\Big(1 + \tfrac{1}{6} R_{i k j \ell}\; x^k x^{\ell} \nu^i \nu^j + O(\abs{x}^3)\Big),
\end{equation*}
in particular $\widetilde{\nu}=\nu+O(\abs{x}^2)$.
We then calculate the second fundamental form and the mean curvature of $S$ with respect to the two different metrics. For the Euclidean metric, we have
\begin{equation*}
h_{\alpha\beta} = -g_e(\nabla_{\theta_\alpha} \nu,\theta_\beta), \quad H=(g^S)^{\alpha\beta}h_{\alpha\beta},
\end{equation*}
and similarly for the background metric $\widetilde{g}$, we have
\begin{equation*}
\widetilde{h}_{\alpha\beta} = -\widetilde{g}(\nabla_{\theta_\alpha} \widetilde{\nu},\theta_\beta), \quad \widetilde{H}=(\widetilde{g}^S)^{\alpha\beta}\widetilde{h}_{\alpha\beta}.
\end{equation*}
Multiplying with $\norm{\widetilde{\mu}}_{\widetilde{g}}$, we can calculate the second fundamental form in the latter case as
\begin{equation*}
\norm{\widetilde{\mu}}_{\widetilde{g}} \widetilde{h}_{\alpha\beta} = -\widetilde{g}(\nabla_{\theta_\alpha} \widetilde{\mu},\theta_\beta) = -\widetilde{g}(\nabla_{\theta_\alpha} \nu,\theta_\beta) - a^\gamma \widetilde{g}(\nabla_{\theta_\alpha} \theta_\gamma, \theta_\beta)
\end{equation*}
The first of the terms on the right hand side will give us $h_{\alpha\beta}$ as well as error terms at least quadratic in $x$. The second term on the other hand will always be at least quadratic in $x$ because of the factor $a^\gamma$. We can therefore arrive at
\begin{equation*}
\norm{\widetilde{\mu}}_{\widetilde{g}} \widetilde{h}_{\alpha\beta} = h_{\alpha\beta} + O(\abs{x}^2).
\end{equation*}
Proving this precisely requires a rather lengthy calculation (involving also estimates for the Christoffel symbols with respect to the different background metrics), and in an attempt to keep this article short and because the calculation does not offer any useful further insight, we skip the details here. We refer the reader to \cite{DiMM} where such a calculation has been carried out in full detail in the more specific situation where $S$ is a perturbation of a sphere.

Now using first our formula \eqref{eq.inverseM} we obtain $\widetilde{h}_{\alpha\beta} = h_{\alpha\beta} + O(\abs{x}^2)$ and then using \eqref{eq.inversemetric}, we deduce that $\widetilde{H}=H+O(\abs{x}^2)$, concluding the claim.
\end{proof}

The statement about the second fundamental form in Point \ref{lemma.p1} of the lemma follows by first showing that the first derivatives of the second fundamental forms satisfy a similar relation as the second fundamental forms themselves. This again involves estimating Christoffel symbols and we leave the details to the reader. The claimed estimates involving norms of the second fundamental form and its derivative then follow from \eqref{eq.inversemetric}. 

We also note that $S$ is $\alpha$-noncollapsed (with respect to the pulled back metric $\widetilde{g}$) if for each point $x \in S$ the balls with radius $\widetilde{r}(x)=\alpha/\widetilde{H}(x)$ and center $\widetilde{C}= x \pm \widetilde{r}(x)\widetilde{\nu}(x)$ touch $S$ only in the point $x$. By Claim \ref{claim1}, for $\delta$ sufficiently small, the mean curvature $H(x)$ of $S$ at $x$ with respect to the Euclidean metric will be extremely close to $\widetilde{H}(x)$. Moreover, as seen in the proof of Claim \ref{claim1}, the unit normal vectors $\nu(x)$ and $\widetilde{\nu}(x)$ will be very close as well. Therefore, for sufficiently small $\delta$, the above balls contain the balls with radius $r(x)=\tfrac{9}{10} \alpha/H(x)$ and center $C = x \pm r(x)\nu(x)$. If follows that with respect to the Euclidean metric, $S$ is $\tfrac{9}{10}\alpha$-noncollapsed, finishing the proof of Point \ref{lemma.p1} of the lemma.

Finally, the remaining Point \ref{lemma.p2} of the lemma follows in a similar way as the other two points and we leave the direct but lengthy calculation to the reader.
\end{proof}


\section{Mean curvature flow with surgery}\label{sec.MCF}

In this section we provide a short overview of the main definitions and results from the theory of mean curvature flow with surgery, following the terminology of Haslhofer-Kleiner \cite{HK17} (in Euclidean space) and its extension by Haslhofer-Ketover \cite{HK19} to the ambient three-manifold setting. We also refer the interested reader to slightly earlier work of Brendle-Huisken \cite{BH18}, where a similar theory of mean curvature flow with surgery in three-manifolds has been developed, extending their prior results from \cite{BH16} in Euclidean $\RR^3$ (which itself extended results of Huisken-Sinestrari \cite{HS09}).

As we have seen at the beginning of Section \ref{sec.glue}, in the situations we are interested in, rather than working with mean convex surfaces we can work with the mean convex domains bounded by them. A smooth family of mean convex domains $\{K_t\}_{t\in I}$ in a complete three-manifold $(M,g)$ is said to move by \emph{mean curvature flow} if every point in the boundary $\bord K_t$ evolves with time derivative equal to the mean curvature vector of the boundary. It is a well known fact that mean convexity is preserved along the mean curvature flow, see also \eqref{eq.evolH} below.

The reason to introduce a surgery procedure, in a nutshell, is that typically mean curvature flow develops singularities where it can no longer be smoothly extended in time. In the general case, singularities can be very complicated (for instance, singularity models can have arbitrary genus, see \cite{BNS21}), but in the mean convex case all local singularities have a (possibly degenerate) cylindrical neck structure. Hence, to avoid dealing with the singularities, one stops the flow shortly before the singularity occurs, replaces neck regions by pairs of opposing caps, discards regions of high curvature, and then restarts the flow. This surgery procedure allows to extend the flow until it becomes extinct. In the following, we will recall the rigorous definitions and existence results from \cite{HK17, HK19}.

\begin{rem}\label{rem.compact}
Haslhofer-Ketover \cite{HK19} work in closed Riemannian manifolds, while our $(M,g)$ is only complete. This however does not create any additional complications because our evolving domains $K_t$ are all contained in the compact domain $\bar{K}$, hence we could change $(M,g)$ into a closed manifold by modifying it outside $\bar{K}$. 
\end{rem}

We want to define a mean curvature flow with surgery starting from an element $K_0$ in $\calD$ (allowing in particular also $K_0=\bar{K}$), but for simplicity, following \cite{HK17, HK19}, we first define a more general class of flows called $(\alpha,\delta)$-flows.

\begin{defi}[$(\alpha,\delta)$-flows, see \cite{HK17, HK19}]\label{def.alphadelta}
An \emph{$(\alpha,\delta)$-flow} is a collection of smooth families of mean convex domains $\{K^i_t\}_{t\in [t_{i-1},t_i]}$ ($i=1,\ldots,k$ and $0=t_0 < t_1 < \ldots < t_k < \infty$), such that:
\begin{enumerate}
\item For each $t$, the domain $K_t$ is $\alpha$-noncollapsed as in Definition \ref{def.noncollapsed}\footnote{While $\alpha$-noncollapsedness is preserved in Euclidean space, this is not the case in ambient manifolds; but by an estimate of Brendle \cite{Br16} the noncollapsing parameter can decay at most exponentially and therefore one could also assume this condition only for the initial domain.}
\item For each $i$, the final time slices of some (possibly empty) collection of disjoint strong $\delta$-necks are replaced by pairs of opposing  standard caps as described in Definition \ref{def.replacement} below, yielding a post-surgery domain $K^\sharp_{t_i}\subseteq K^i_{t_i} =: K^{-}_{t_i}$.\label{item2.alphadelta}
\item For each $i$, the initial time slice of the next flow $K^{i+1}_{t_i} =: K^{+}_{t_i}$ is obtained from the post-surgery domain $K^\sharp_{t_i}$ by discarding some (possibly empty) collection of connected components.\label{item3.alphadelta}
\item All surgeries are at comparable scales, meaning there exists a radius $s_0>0$, such that all $\delta$-necks in Point \ref{item2.alphadelta} of this definition have radius $s \in [\tfrac{1}{2}s_0,2s_0]$ for each $i$.
\end{enumerate}
\end{defi}

Let us discuss the second point of this definition in more detail. 

An $(\alpha,\delta)$-flow has a \emph{$\delta$-neck} with center $p$ and radius $s$ at time $t_0$ if $4s/\delta \leq \inj(\bar{K})$ as well as $s^{-1} \exp_p^{-1}(K_{t_0} \cap B_{2s/\delta}(p))$ is $\delta$-close in $C^{\lfloor{1/\delta}\rfloor}$ in $B_{1/\delta} \cap s^{-1} \exp_p^{-1}(B_{2s/\delta}(p)) \subset \RR^3$ to a solid round cylinder $D^2 \times \RR$ with radius $1$. Moreover, it has a \emph{strong} $\delta$-neck with center $p$ and radius $s$ at time $t_0$ if $4s/\delta \leq \inj(\bar{K})$ and $\{s^{-1} \exp_p^{-1}(K_{t_0+s^2t} \cap B_{2s/\delta}(p))\}_{t\in(-1,0]}$ is $\delta$-close in $C^{\lfloor{1/\delta}\rfloor}$ in $B_{1/\delta} \cap s^{-1} \exp_p^{-1}(B_{2s/\delta}(p)) \subset \RR^3$ to the evolution of a solid round cylinder $D^2 \times \RR$ with radius $1$ at $t = 0$. Finally, a \emph{standard cap} is a smooth, closed, convex, connected, and unbounded codimension 0 submanifold with boundary $K_{st} \subset \RR^3$ that coincides with a solid round half-cylinder of radius $1$ outside a ball of radius $10$. We can create a rotationally symmetric standard cap by smoothing out the set obtained from a half-ball attached to a half-cylinder. In the following, we assume that some standard cap has been fixed.

We can now define what we mean by replacing a strong $\delta$-neck with two opposing standard caps in the above definition.

\begin{defi}[Surgery procedure, see \cite{HK17, HK19}]\label{def.replacement}
After fixing a large cap-separation parameter $\Gamma$, we say that the final time slice of a strong $\delta$-neck ($\delta \leq 1/10\Gamma$) with center $p$ and radius $s$ is \emph{replaced by a pair of standard caps}, if (in a neighbourhood of $p$) the pre-surgery domain $K^{-}$ is replaced by a post-surgery domain $K^\sharp \subseteq K^{-}$ such that the following properties hold.
\begin{enumerate}
\item The replacement takes place inside $B=B_{5\Gamma s}(p)$.
\item For each integer $\ell \geq 0$ there exists a constant $C_\ell$, such that $\sup_{\bord K^\sharp \cap B} \lvert \nabla^\ell A \rvert \leq C_\ell s^{-(1+\ell)}$.
\item For each $p^\sharp \in K^\sharp$ with negative lower principal curvature $\lambda_1(p^\sharp) <0$ there is a corresponding $p^{-} \in K^{-}$ with $\lambda_1/H (p^\sharp) \geq \lambda_1/H (p^{-}) - o(s)$.
\item $s^{-1} \exp_p^{-1}(K^\sharp)$ is $o(\delta)$-close in $B_{10\Gamma}(0) \subset \RR^3$ to a pair of opposing standard caps at distance $\Gamma$ from the origin. Moreover there is an ``almost straight line'' between the tips of the caps, i.e.~a curve that is $o(\delta)$-close to a straight line in $\mathbb{R}^3$ and satisfies the estimates on $\kappa$ and $\nabla\kappa$ of an $o(\delta)^{-1}$-controlled curve.\label{item4.replacement}
\end{enumerate}
\end{defi}

We are finally ready to give the main definition of mean curvature flow with surgery, refining the definition of $(\alpha,\delta)$-flow.

\begin{defi}[MCF with surgery, see \cite{HK17, HK19}]
Fix a large cap-separation parameter $\Gamma$ and let $\bH=(H_\trig , H_\neck, H_\tth)$ be a triple of positive numbers with $H_\trig > H_\neck > H_\tth $, referred to as trigger, neck, and thick curvature scales. An $(\alpha,\delta,\bH)$-flow (also called a mean curvature flow with surgery) is an $(\alpha,\delta)$-flow with the following properties.
\begin{enumerate}
\item $H \leq H_\trig$ everywhere, and surgery/discarding occurs precisely at times when $H = H_\trig$ somewhere.
\item The collection of necks in Point \ref{item2.alphadelta} of Definition \ref{def.alphadelta} is a minimal collection of solid strong $\delta$-necks of curvature $H_\neck$ which separate $\{H = H_\trig\}$ from $\{H \leq H_\tth\}$ in $K^{-}_t$.
\item The collection of connected components that are discarded in Point \ref{item3.alphadelta} of Definition \ref{def.alphadelta} contains precisely the components with $H > H_\tth$ everywhere.\footnote{In particular, this means that of each
pair of opposing caps precisely one is discarded.}
\item Replacements are done in an optimal way in the sense that if a strong $\delta$-neck in Point \ref{item2.alphadelta} is in fact a strong $\widehat{\delta}$-neck for some $\widehat{\delta} < \delta$, then Point \ref{item4.replacement} of Definition \ref{def.replacement} also holds with this $\widehat{\delta}$.
\end{enumerate}
\end{defi}

\begin{theo}[Existence Theorem, Theorem 7.7 in \cite{HK19}]\label{thm.existence}
Let $K_0 \in \calD$ be a mean convex domain in $\bar{K}$ (allowing also $K_0=\bar{K}$). Then for every $T < \infty$, choosing $\alpha$ and $\delta$ small enough as well as scales $\bH$ large enough\footnote{To be precise, also the ratios $H_\trig/H_\neck$ and $H_\neck/H_\tth$ need to be sufficiently large.}, there exists an $(\alpha,\delta,\bH)$-flow $\{K_t\}_{t\in[0,T]}$ with initial condition $K_0$.
\end{theo}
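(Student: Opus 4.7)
The plan is to adapt the Haslhofer--Kleiner construction of MCF with surgery \cite{HK17} to the present ambient setting, using the general philosophy that at scale $H^{-1}$ the ambient geometry becomes negligible compared to the flow's own curvature. I would proceed inductively on surgery times: start with smooth MCF from $K_0$, flow until the first time $t_1$ at which $H$ attains $H_\trig$ somewhere, perform a carefully chosen collection of strong $\delta$-neck surgeries and component discards, and then restart. The main work consists of three a priori ingredients: (i) uniform $\alpha$-noncollapsing along the smooth pieces, (ii) a canonical neighbourhood theorem identifying a neck or convex-cap structure at points where $H \ge H_\neck$, and (iii) a surgery step that respects all conditions in Definitions \ref{def.alphadelta} and \ref{def.replacement}, producing a post-surgery domain on which the same a priori estimates still hold so that the flow can be restarted with the same constants.

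For (i), I would invoke Brendle's estimate \cite{Br16} referenced in the footnote: the noncollapsing constant decays at most exponentially along smooth MCF in a manifold with bounded curvature, so by choosing the initial noncollapsing parameter slightly larger than $\alpha$ one retains uniform $\alpha$-noncollapsing on the fixed interval $[0,T]$. For (ii), I would argue by contradiction and blow-up: assuming space-time points where $H$ is arbitrarily large but no strong $\delta$-neck or convex-cap neighbourhood forms, one rescales by $H$. Because $\inj(\bar K)$ and the ambient curvatures are uniformly bounded on $\bar K$, the rescaled ambient metrics converge locally smoothly to Euclidean $\RR^3$, and the rescaled (mean convex, $\alpha$-noncollapsed) flows subconverge to an ancient $\alpha$-noncollapsed mean convex flow in $\RR^3$. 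The Haslhofer--Kleiner classification of such ancient solutions then forces either a neck or a cap structure on the original flow, a contradiction.

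Given (i) and (ii), at the trigger time $t_1$ every point with $H \ge H_\neck$ sits either in a strong $\delta$-neck or in a convex-cap region. A neck-continuation argument then extracts a minimal finite collection of pairwise disjoint strong $\delta$-necks of radius $s \in [\tfrac{1}{2}s_0, 2s_0]$ with $s_0 \sim H_\neck^{-1}$, separating $\{H = H_\trig\}$ from $\{H \le H_\tth\}$ inside $K^{-}_{t_1}$. Replacing the final time slice of each such neck by a pair of opposing fixed standard caps (smoothly interpolated so that the quantitative bounds of Definition \ref{def.replacement} are satisfied) yields $K^\sharp_{t_1}\subseteq K^{-}_{t_1}$, and discarding all components on which $H > H_\tth$ everywhere produces $K^{+}_{t_1}$. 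The almost-straight-line condition in Point \ref{item4.replacement} is obtained by tracking the symmetry axis of the neck across the $C^{\lfloor 1/\delta\rfloor}$-closeness to the round cylinder and using that cap-separation at scale $\Gamma$ controls both the Euclidean straightness and the estimates on $\kappa, \nabla\kappa$.

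The main obstacle is termination combined with the self-consistency of the induction: only finitely many surgeries may occur on $[0,T]$, and the a priori estimates (i) and (ii) must hold with the same constants after each surgery so that the process iterates. Termination follows the standard Haslhofer--Kleiner argument that every surgery or discard decreases a fixed monotone quantity -- most conveniently the enclosed volume or a suitable weighted boundary area -- by a uniform positive amount controlled by $H_\neck$, whereas smooth MCF decreases the same quantity monotonically; the initial value is finite by compactness of $\bar K$. Preservation of (i) and (ii) after surgery is the delicate point and is precisely what forces $H_\trig/H_\neck$ and $H_\neck/H_\tth$ large, $\delta$ small, and the cap-separation parameter $\Gamma$ large: the new boundary pieces are built from rescaled standard caps, which are strictly convex and hence $\alpha$-noncollapsed at their own scale, and the optimality clause in the definition of $(\alpha,\delta,\bH)$-flow guarantees the canonical neighbourhood theorem remains applicable on the post-surgery domain. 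Once these choices are made, the induction closes and an $(\alpha,\delta,\bH)$-flow exists on $[0,T]$.
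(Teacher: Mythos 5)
Your proposal takes a genuinely different route from the paper. The paper does not prove this theorem at all: it is cited verbatim as Theorem 7.7 in Haslhofer--Ketover \cite{HK19}, and the only argument the authors supply is a short reduction (Remark \ref{rem.compact} and the paragraph following the statement) addressing the single point where their hypotheses differ from \cite{HK19}: the ambient manifold $(M,g)$ is complete rather than closed. Since any mean curvature flow with surgery starting from $K_0 \subset \bar K$ stays inside the compact domain $\bar K$, one may modify $(M,g)$ to a closed manifold outside $\bar K$ and then invoke \cite{HK19} directly. What you have written, by contrast, is a top-to-bottom sketch of the Haslhofer--Kleiner/Haslhofer--Ketover existence proof itself: preservation of $\alpha$-noncollapsing via Brendle's estimate, the blow-up argument for the canonical neighbourhood theorem (with ambient curvature washed out in the rescaled limit), the neck-continuation construction of the minimal separating collection, preservation of a priori estimates after surgery, and termination via a monotone volume quantity. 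This is the correct proof strategy for the cited theorem and the individual steps are accurately described, but it proves something much stronger than the paper needs to prove, replacing a one-paragraph reduction with a re-derivation of a multi-page result. If your goal is to reproduce what this paper does, the correct answer is: observe that the flow is confined to $\bar K$, modify $M$ outside $\bar K$ to a closed manifold, and apply \cite[Theorem 7.7]{HK19}. If your goal is to understand why \cite[Theorem 7.7]{HK19} is true, your outline is the right roadmap, though each of your three ``ingredients'' is a substantial theorem in its own right and would need the full technical apparatus of \cite{HK17, HK19} to be made rigorous.
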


An important point is that $\bar{K}$ is compact, thus the theory developed for closed manifolds also works, despite the fact that our $(M,g)$ is only complete, as explained in Remark \ref{rem.compact}. Generally, some components of a mean curvature flow with surgery might exist for all time and converge to a minimal surface in the ambient three-manifold, but in our specific case where the ambient manifold $M$ has nonnegative Ricci curvature $\Ric_g\geq 0$, we always obtain a finite extinction result.

\begin{prop}[Finite extinction]\label{prop.extenction}
Let $K_0 \in \calD$ be a mean convex domain in $\bar{K}$ satisfying $\min_{p\in\bord K_0} H(p)\geq c>0$. Then every mean curvature flow with surgery starting from $K_0$ as in Theorem \ref{thm.existence} becomes extinct in time $T_\ext < c^{-2}$.
\end{prop}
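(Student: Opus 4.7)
The plan is to run a scalar maximum-principle argument on the mean curvature itself, extract a differential inequality for the spatial minimum of $H$, and use the uniform upper bound $H\leq H_\trig$ built into the definition of an $(\alpha,\delta,\bH)$-flow to conclude that the domain must become empty strictly before the blow-up time of the comparison ODE.

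First, I would invoke the standard evolution equation under MCF,
\begin{equation*}
\partial_t H = \Delta H + \bigl(\lvert A\rvert^2 + \Ric_g(\nu,\nu)\bigr)H.
\end{equation*}
Combining the hypothesis $\Ric_g\geq 0$ with the pointwise inequality $\lvert A\rvert^2\geq\tfrac{1}{2}H^2$ (immediate from $(h_1+h_2)^2\leq 2(h_1^2+h_2^2)$ applied to the principal curvatures of a surface) yields, on every smooth piece of the flow,
\begin{equation*}
\partial_t H \geq \Delta H + \tfrac{1}{2}H^3.
\end{equation*}
Applying the parabolic maximum principle to $h(t):=\min_{\bord K_t}H$ then gives $h'\geq\tfrac{1}{2}h^3$ with $h(0)\geq c$. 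Comparing against the ODE $\phi'=\tfrac{1}{2}\phi^3$, $\phi(0)=c$, whose explicit solution is $\phi(t)=(c^{-2}-t)^{-1/2}$, I obtain the bound $H_{\min}(t)\geq(c^{-2}-t)^{-1/2}$ wherever and whenever the smooth flow is nonempty.

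The main obstacle is ensuring that this bound survives the surgery and discarding operations from Definition \ref{def.alphadelta}. Discarding a connected component only removes points from $\bord K_t$ and therefore cannot decrease $\min H$ on the remainder (if the whole domain is discarded, that is precisely the desired extinction). For the neck-replacement step, the key observation is that every neck to be replaced at a surgery time $t_i$ has curvature exactly $H_\neck$, so the pre-surgery bound already forces $H_\neck\geq(c^{-2}-t_i)^{-1/2}$. By Point 4 of Definition \ref{def.replacement}, the post-surgery domain, rescaled by the neck radius $s\approx H_\neck^{-1}$, is $o(\delta)$-close to a pair of standard caps, which (being convex and coinciding with a unit half-cylinder outside a ball of radius $10$) have mean curvature bounded below by a constant arbitrarily close to $1$. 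Rescaling back, the new caps satisfy $H\geq(c^{-2}-t_i)^{-1/2}$ as well, and the bound persists across every surgery.

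Finally, since $H\leq H_\trig$ everywhere along the flow but $(c^{-2}-t)^{-1/2}\to\infty$ as $t\to c^{-2}$, the two bounds become inconsistent at some time $t^\ast<c^{-2}$. This is compatible only with $\bord K_{t^\ast}=\emptyset$, which gives $T_\ext\leq t^\ast<c^{-2}$ and completes the proof.
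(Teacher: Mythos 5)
Your proof is correct and follows essentially the same route as the paper: derive $\partial_t H \geq \Delta H + \tfrac12 H^3$ from $\Ric_g\geq 0$ and $|A|^2\geq\tfrac12 H^2$, pass to $H_{\min}$ by the maximum principle, and compare with the ODE blow-up at time $c^{-2}$. The only difference is that the paper compresses the surgery/discarding discussion into a single sentence ("the surgery procedure does not influence this bound until the entire flow becomes extinct"), whereas you spell out why: discarding shrinks the set over which the minimum is taken, and the replaced necks/caps live at the much larger curvature scale $H_\neck$, so the lower bound is preserved across every surgery.
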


\begin{proof}
The evolution equation for the mean curvature (of $\bord K_t$) in the ambient manifold $(M,g)$ with $\Ric_g\geq 0$ yields
\begin{equation*}
\partial_t H = \Delta H + |A|^2 H + \Ric_g(\nu,\nu) H \geq \Delta H + \tfrac{1}{2} H^3
\end{equation*}
and hence in particular
\begin{equation}\label{eq.evolH}
\partial_t H_{\min}(t) \geq \tfrac{1}{2}H_{\min}^3(t),
\end{equation}
where $H_{\min}(t) := \min_{p\in\bord K_t} H(p)$. The surgery procedure does not influence this bound until the entire flow becomes extinct. Therefore, if the initial domain $K_0$ satisfies $H_{\min}(0) \geq c>0$, then by the comparison principle $H_{\min}(t) \geq (c^{-2}-t)^{-1/2} \nearrow \infty$ as $t\to c^{-2}$ and in particular the flow must become extinct in time $T_\ext < c^{-2}$ (which will also depend on the choice of scales $\bH$). Again, the comparison principle can be applied directly since we only work in the compact subset $\bar{K}$ of $(M,g)$.
\end{proof}

While the surgery procedure cuts our domain into several pieces, the gluing construction from Section \ref{sec.glue} can be used to connect it again with a very thin tubular string, using in particular also that all domains and curves are contained in $\bar{K}$. We glue in a string along the image $\gamma$ under the exponential map of the almost straight line between two opposing standard caps in a post-surgery region (as in Point \ref{item4.replacement} of Definition \ref{def.replacement}). We call $\gamma$ an \emph{almost geodesic}. An important result is that there is an isotopy between the pre-surgery domain and the domain obtained by first performing surgery and then gluing together again. Note that this latter domain in contained in the pre-surgery domain and hence in particular also in $\bar{K}$. It is important to preserve mean convexity, so we introduce the concept of \emph{mean convex isotopy} which is simply a smooth family $\{K_t\}$ of mean convex domains indexed by a real interval. We then have the following result that generalises \cite[Proposition 6.6]{BHH21} from the Euclidean case to the ambient three-manifold setting.

\begin{prop}[Isotopy from necks to glued caps]\label{prop.surgeryiso}
Assume that $K^\sharp$ is obtained from $K^{-}$ by replacing a strong $\delta$-neck by a pair of standard caps and assume that $\gamma$ denotes the almost geodesic connecting the tips of the caps in $M$. Then for $r$ small enough there is a mean convex isotopy between $K^{-}$ and $\mathcal{G}_{r}(K^\sharp,\gamma)$.
\end{prop}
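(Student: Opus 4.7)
The plan is to reduce to the Euclidean gluing result \cite[Proposition 6.6]{BHH21} by working locally in the ball where the surgery takes place. The key point is that by Definition \ref{def.replacement}, the replacement of a strong $\delta$-neck of radius $s$ is entirely supported inside $B=B_{5\Gamma s}(p)$, and outside of $B$ the domains $K^-$, $K^\sharp$, and hence $\mathcal{G}_r(K^\sharp,\gamma)$ already coincide. So the whole isotopy can be built inside $B$ and extended by the identity outside. Moreover, because $s \sim 1/H_\neck$ is small and we are free to choose $H_\neck$ as large as we wish, the ball $B$ lies well inside the injectivity radius of $\bar K$ and the metric, pulled back via $\exp_p$, is extremely close to the Euclidean metric.

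The concrete steps are as follows. First, identify $T_pM$ with $\mathbb{R}^3$ via a linear isometry and pull back, via $\exp_p^{-1}$, the pre-surgery domain $K^-\cap B$, the post-surgery domain $K^\sharp \cap B$, and the almost geodesic $\gamma$. By the strong $\delta$-neck condition, the rescaled pullback of $K^-$ is $\delta$-close to a round solid cylinder, and by Point \ref{item4.replacement} of Definition \ref{def.replacement} the pullback of $\gamma$ is $o(\delta)$-close to a straight segment joining the tips of two opposing standard caps. Second, apply the Euclidean isotopy result \cite[Proposition 6.6]{BHH21} to this configuration in $\mathbb{R}^3$: for all sufficiently small string radius $\hat r$, one obtains a mean convex isotopy in the Euclidean sense between $\exp_p^{-1}(K^-\cap B)$ and the Euclidean glued domain obtained from $\exp_p^{-1}(K^\sharp\cap B)$ by adding a tubular neighbourhood of radius $\hat r$ around the straight segment. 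Third, push this isotopy forward via $\exp_p$ and, in the annular region $B_{2\delta(r)}(p)\setminus B_{\delta(r)}(p)$, interpolate between the Euclidean tubular neighbourhood and the intrinsic tubular neighbourhood $N_r(\gamma)$ exactly as was done in the construction of $\mathcal{G}_r$ in the proof of Theorem \ref{thm.glue}. This yields a continuous family of compact domains agreeing with $K^-$ outside $B$, equal to $K^-$ at the starting time and to $\mathcal{G}_r(K^\sharp,\gamma)$ at the ending time.

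The main obstacle is verifying that the pushed-forward family is \emph{mean convex} throughout the isotopy, not only at the two endpoints. This is precisely the content of Lemma \ref{lemma.glue}: at scales $\ll 1$, the second fundamental form and mean curvature computed in the pulled-back metric $\widetilde g=\exp_p^*g$ differ from those computed in the Euclidean metric only by quadratic errors in the distance to $p$. The Euclidean isotopy from \cite[Proposition 6.6]{BHH21} comes with an explicit positive lower bound $c_{\mathrm{Eucl}}>0$ on the mean curvature along the whole deformation, depending only on the surgery data. Since the surgery radius $s$ and the support $B$ have size comparable to $1/H_\neck$, which we may assume arbitrarily small, the Euclidean mean curvature lower bound dominates the $O(s^2)$ correction from Claim \ref{claim1}. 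Combined with the corresponding control on $|A|+|\nabla A|$ and on noncollapsing from Points \ref{lemma.p1}--\ref{lemma.p3} of Lemma \ref{lemma.glue}, this guarantees that the pushed-forward family consists of mean convex (indeed uniformly mean convex) domains for every $r<\bar r$, with $\bar r$ depending on the surgery scales and on the geometry of $\bar K$. Extending by the identity outside $B$ then produces the desired mean convex isotopy between $K^-$ and $\mathcal{G}_r(K^\sharp,\gamma)$ in $M$.
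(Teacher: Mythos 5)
Your proposal is correct in substance and takes essentially the same route as the paper: localize near the surgery region, pull back via the exponential map, invoke the Euclidean isotopy result \cite[Proposition 6.6]{BHH21}, push forward, and use Lemma \ref{lemma.glue} to show the $O(\mathrm{distance}^2)$ curvature errors are dominated by the large mean curvature at these scales so that mean convexity survives. The finite extinction/scale argument you give (errors $O(s^2)$ versus mean curvature $\sim H_\neck$, both at your disposal) is exactly what is needed.

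The one place where your write-up is imprecise is the choice of chart and the resulting interpolation step. You work in a single exponential chart $\exp_p$ centered at the neck center $p$, whereas the paper works in two charts $\exp_{p_\pm}$ centered at the two cap tips $p_\pm$. The paper's choice is the more natural one because, by construction, $\mathcal{G}_r(K^\sharp,\gamma)$ is defined near the tips precisely as a pushforward of a Euclidean gluing \emph{via} $\exp_{p_\pm}$; hence the pullback $\exp_{p_\pm}^{-1}(\mathcal{G}_r(K^\sharp,\gamma))$ differs from the Euclidean gluing $\mathcal{G}_r(\exp_{p_\pm}^{-1}(K^\sharp),\exp_{p_\pm}^{-1}(\gamma))$ only along the tube, and the remaining interpolation happens only where the two balls around $p_+$ and $p_-$ overlap (the middle of the neck, where everything is close to a highly mean convex cylinder). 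With your single chart at $p$, the pushforward of the Euclidean gluing differs from $\mathcal{G}_r(K^\sharp,\gamma)$ not only in the tube but also in the cap-gluing regions near $p_\pm$, because those regions of $\mathcal{G}_r$ were built in $\exp_{p_\pm}$-coordinates, not $\exp_p$-coordinates; moreover, the annuli $B_{2\delta(r)}(p)\setminus B_{\delta(r)}(p)$ around the neck center that you mention are not where the interpolation of Theorem \ref{thm.glue} takes place — those annuli sit around the gluing points $p_\pm$, i.e.\ around the cap tips. Your approach can still be made to work, since the two normal coordinate systems differ only to quadratic order and the mean curvature is uniformly large throughout, but you should acknowledge and handle this additional discrepancy near the tips explicitly rather than reducing the interpolation to the tubular neighbourhood alone.
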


\begin{proof}
Denote by $p_{\pm}$ the two gluing points and call their distance $d$. Set $\widetilde{K}_{\pm} = \exp_{p_{\pm}}^{-1}(K^{-})$ and $\widetilde{G}^1_{\pm} = \exp_{p_{\pm}}^{-1}(\mathcal{G}_{r}(K^\sharp,\gamma))$. If we work with $H_\neck$ sufficiently large and therefore the surgery happens in a sufficiently small ball around $p_{\pm}$, these regions are mean convex in $\mathbb{R}^3$ by Lemma \ref{lemma.glue}. As in the proof of Theorem \ref{thm.glue} we can easily interpolate between $\widetilde{G}_{\pm}^1$ and the ``Euclidean gluing'' $\widetilde{G}_{\pm}^2=\mathcal{G}_{r}(\exp_{p_{\pm}}^{-1}(K^\sharp),\exp_{p_{\pm}}^{-1}(\gamma))$ inside balls of radius $\tfrac{2}{3}d$ around the gluing points, because $\exp_{p_{\pm}}^{-1} (N_{r}(\gamma))$ and $N_{r}(\exp_{p_{\pm}}^{-1}(\gamma))$ have very large mean curvature (and the domains otherwise agree by construction). This gives a mean convex isotopy between $\widetilde{G}_{\pm}^1$ and $\widetilde{G}_{\pm}^2$ in balls of radius $\tfrac{2}{3}d$ around $p_{\pm}$ and brings us back to the Euclidean case where according to \cite[Proposition 6.6]{BHH21} there also exists a mean convex isotopy between $\widetilde{K}_{\pm}$ and $\widetilde{G}_{\pm}^2$. Combining the two mean convex isotopies, applying Lemma \ref{lemma.glue} one more time to go back to $M$, and interpolating in the regions where the balls overlap proves the proposition.
\end{proof}

The final ingredient that is needed is a description of the possible types of discarded components. For the following definition, we need the notion of a standard cap $K_{st}$ given above, as well as of an $(C,\eps)$-cap, by which we mean a closed, strictly convex, connected, and unbounded codimension 0 submanifold with boundary $K_{C,\eps} \subset \RR^3$ such that every point outside some compact subset of $\Int K$ with diameter at most $C$ is the center of some $\eps$-neck of radius $1$.

\begin{defi}[Capped tubes and tubular loops]\label{def.discarded}
A \emph{capped $\eps$-tube} is a mean convex domain $K$ diffeomorphic to $\bar{B}^3$ such that there is a curve $\gamma\subset K$ with $\bord \gamma = \gamma \cap \bord K=\{\bar p_+,\bar p_-\}$ and such that:
\begin{enumerate}
\item $H(\bar p_\pm)^{-1} \exp^{-1}_{\bar p_\pm} (K\cap B_{2CH(\bar p_\pm)^{-1}} (\bar p_\pm))$ is $\eps$-close, after rescaling, to either a $(C,\eps)$-cap $K_{C,\eps}$ or a standard cap $K_{st}$.
\item For every $p\in \gamma$, if $d(p,\bar p_+) \ge C H(\bar p_+)^{-1}$ and $d(p,\bar p_-) \ge C H(\bar p_-)^{-1}$ then $p$ is the center of some $\eps$-neck of some radius $s$ with axis tangent to $\gamma$, and such that $\gamma$ is $\eps^{-2}s$-controlled in $B_{\eps^{-1}s} (p)$.
\end{enumerate}
An \emph{$\eps$-tubular loop} is a mean convex solid torus admitting a core $\gamma$ with the following property: every point $p\in\gamma$ is center of an $\eps$-neck of some radius $s$ with axis tangent to $\gamma$, and such that $\gamma$ is $\eps^{-2}s$-controlled in $B_{\eps^{-1}s} (p)$.
\end{defi}

A direct consequence of the canonical neighbourhood theorems (Theorem 1.22 in \cite{HK17} and Theorem 7.6 in \cite{HK19}) and Corollary 1.25 in \cite{HK17} is the following result (see also \cite{BHH21, BHH19} for the corresponding statement in Euclidean space).

\begin{prop}[Discarded components]\label{prop.discarded}
All discarded components of the $(\alpha,\delta,\bH)$-flow $\{K_t\}$ from Theorem \ref{thm.existence} are diffeomorphic to $\bar{B^3}$ or to $D^2 \times \Sph^1$. Moreover, the components that are diffeomorphic to $\bar{B^3}$ are either (a) convex or (b) capped $\eps$-tubes; the components diffeomorphic to $D^2 \times \Sph^1$ are (c) $\eps$-tubular loops.
\end{prop}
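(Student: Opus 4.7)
My plan is to derive the statement directly from the canonical neighbourhood theorem (Theorem 7.6 in \cite{HK19}), together with its Euclidean predecessor's topological consequence (Corollary 1.25 in \cite{HK17}), adapted to the ambient setting by the local Euclidean comparison already developed in Section \ref{sec.glue}. The starting observation will be that by the discard rule, a discarded component $K$ satisfies $H > H_\tth$ at every point of $\bord K$. Choosing $H_\tth$ sufficiently large relative to the rescaling scale demanded by Theorem 7.6, I can then assert that every $p\in \bord K$ is the centre of a canonical neighbourhood: after rescaling by $H(p)$ and pulling back to $T_pM$ via $\exp_p$, the rescaled boundary is $\eps$-close to an $\eps$-neck, an $(C,\eps)$-cap, or the standard cap $K_{st}$ (the last occurring only on caps created by a previous surgery).

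Next I would feed this cover into the topological-and-geometric classification of Corollary 1.25 in \cite{HK17} by building a graph whose vertices are maximal cap-type regions and whose edges are maximal $\eps$-neck chains. A compact connected mean convex domain whose boundary admits such a decomposition is forced, by tracking the neck axes, to fall into exactly one of three configurations: (a) a single convex cap neighbourhood, diffeomorphic to $\bar B^3$; (b) a capped $\eps$-tube whose neck chain terminates at a cap on each side, again diffeomorphic to $\bar B^3$; or (c) an $\eps$-tubular loop whose neck chain closes up on itself, diffeomorphic to $D^2\times \Sph^1$. The core curve $\gamma$ required in Definition \ref{def.discarded} will be constructed by concatenating the axes of consecutive $\eps$-necks and either terminating at the cap centres (cases (a)-(b)) or closing it up (case (c)).

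The step I expect to require the most care is transferring the Euclidean neck-chain classification to the Riemannian manifold $M$, since neck axes are initially defined only inside the normal coordinate chart at each $p$. The key point is that every canonical neighbourhood lives at scale $H(p)^{-1} \ll \inj(\bar K)$ once $H_\tth$ is chosen large, so it is contained in a single exponential chart, and Lemma \ref{lemma.glue} shows that curvature quantities measured in the Euclidean pull-back differ from the intrinsic ones by controlled $O(\abs{x}^2)$ errors. These errors will be strictly smaller than any prescribed tolerance after enlarging $H_\tth$ further if necessary, so the Euclidean neck axes glue into an ambient curve $\gamma$ that remains $\eps^{-2}s$-controlled on the required scale $B_{\eps^{-1}s}(p)$ demanded by Definition \ref{def.discarded}. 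No genuinely new geometric input will be needed beyond what is already in Sections \ref{sec.glue}-\ref{sec.MCF} and in \cite{HK17, HK19}; the proposition is essentially a packaging statement at the interface between the canonical neighbourhood theorem and the topological bookkeeping of neck chains.
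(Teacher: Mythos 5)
Your proposal is correct and matches the paper's own approach exactly. The paper does not actually write out a proof of Proposition~\ref{prop.discarded}; it simply states that the result is ``a direct consequence of the canonical neighbourhood theorems (Theorem 1.22 in \cite{HK17} and Theorem 7.6 in \cite{HK19}) and Corollary 1.25 in \cite{HK17},'' and points to the Euclidean analogues in \cite{BHH21, BHH19}. Your plan --- invoke the discard rule $H > H_\tth$, apply the ambient canonical neighbourhood theorem of \cite{HK19}, run the neck/cap chain classification of Corollary 1.25 of \cite{HK17}, and extract a core curve from the concatenated neck axes --- is precisely that chain of citations unpacked. One small remark: your concern about transferring the Euclidean neck-axis analysis to $M$ via Lemma~\ref{lemma.glue} is less delicate than you suggest, because Theorem 7.6 of \cite{HK19} is already formulated in the ambient Riemannian setting (the canonical neighbourhoods are, by definition, $\eps$-close to the Euclidean models after rescaling and pulling back by $\exp_p$); the comparison estimates from Section~\ref{sec.glue} are therefore built into the hypothesis rather than needing to be re-derived, and what remains is indeed only the topological bookkeeping you describe.
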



\section{Reduction to sphere or tubular neighbourhood of a curve}\label{sec.BHH}

We still assume that $\bar{K}$ is a compact domain in a complete orientable Riemannian three-manifold $(M,g)$ with nonnegative Ricci curvature. The radii $r_s \ll r_m$ in the following definition have to be thought of as being extremely small.

\begin{defi}\label{def.mg}
A \emph{marble graph} in $\bar{K}$ with string radius $r_s$ and marble radius $r_m$ is a domain of the form $G=\calG_{r_s} (K,\gamma)$ where $(K,\gamma)$ is a $(\bA,\bb)$-controlled configuration such that there exists a finite set $P \subset \bar{K}$ such that
\begin{enumerate}
\item $K$ is the disjoint union of the balls $\bar B_{r_m} (p) \subset \bar{K}$ for $p\in P$.
\item for every $p\in P$, $\gamma\cap \bar B_{10r_m} (p)$ is a union of geodesic segments.\label{mg3}
\end{enumerate}

We say that $G$ is a \emph{marble tree} (respectively a \emph{marble circuit}) if $K\cup \gamma$ is contractible (respectively homotopy equivalent to a circle.) To shorten the proofs below a little bit, we sometimes work without the condition \ref{mg3} and call the resulting object an \emph{almost marble graph}.
\end{defi}

In this section, we show the following results.

\begin{theo}\label{theo.isotopy1}
Every mean convex domain $K_0$ in $\bar{K}$ is mean convex isotopic to a marble graph.
\end{theo}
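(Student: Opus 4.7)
The plan is to run the mean curvature flow with surgery from $K_0$ and to convert its three ingredients — smooth evolution, neck replacements, and discarded components — into a single mean convex isotopy from $K_0$ to a marble graph. First, by Theorem~\ref{thm.existence} there exists an $(\alpha,\delta,\bH)$-flow $\{K_t\}$ starting at $K_0$, and Proposition~\ref{prop.extenction} guarantees finite extinction, so the set of surgery/discarding times is finite: $0 < t_1 < \ldots < t_k < T_\ext$. On each smooth interval $(t_{i-1},t_i)$ the mean curvature flow itself is a mean convex isotopy from $K^+_{t_{i-1}}$ to $K^-_{t_i}$; and at each surgery time $t_i$, Proposition~\ref{prop.surgeryiso} provides a mean convex isotopy from $K^-_{t_i}$ to $\mathcal{G}_{r_i}(K^\sharp_{t_i},\gamma_i)$, where $\gamma_i$ collects the almost-geodesics joining the tips of the opposing standard caps produced by the surgery.

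I would then proceed by backward induction on $i$, proving that $K^-_{t_i}$ is mean convex isotopic to a marble graph. For the base case, at the final event the remaining components all either shrink smoothly to a round point or are discarded. Thanks to Proposition~\ref{prop.discarded}, each discarded component is either a convex ball, a capped $\eps$-tube, or an $\eps$-tubular loop, and each such component admits an essentially local mean convex isotopy to a small marble or a small marble circuit: convex balls shrink along their own MCF to round points, capped $\eps$-tubes contract along their neck–cap structure to one or two marbles joined by a short string, and $\eps$-tubular loops retract radially onto a thin tube around their core curve, which is a marble circuit. For the inductive step, Proposition~\ref{prop.surgeryiso} reattaches, via the strings $\gamma_{i-1}$, the already-reduced marbles and marble circuits coming from the discarded components at time $t_{i-1}$ to the marble graph inherited by induction from later times, producing a larger marble graph that is mean convex isotopic to $K^-_{t_{i-1}}$. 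Concatenating all these pieces with the smooth-flow isotopies on the intervals $(t_{i-1},t_i)$ yields the desired mean convex isotopy from $K_0$ to a marble graph.

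The main obstacle is to ensure that the local reductions of discarded components are carried out compatibly with the strings attached to them, and that the cumulative gluing parameters actually satisfy Definition~\ref{def.mg}. More concretely, every application of Theorem~\ref{thm.glue} must stay below its threshold $\bar r$, the strings must remain $\bb$-controlled throughout, and the endpoints of $\gamma$ must land on the resulting marbles in the geodesic configuration required by item~\ref{mg3} of Definition~\ref{def.mg}. These constraints can be met by choosing the surgery scales $\bH$ and marble radii sufficiently small and by performing a final local straightening of $\gamma$ near each marble — a mean-convex-preserving adjustment made possible by the fact that the string portions have very large mean curvature and that the marbles can be taken arbitrarily small relative to the string radius. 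All constructions stay inside the fixed compact domain $\bar K$, so the controlled-configuration constants of Sections~\ref{sec.glue}–\ref{sec.MCF} remain uniform throughout.
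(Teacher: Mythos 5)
Your overall skeleton --- mean curvature flow with surgery, finite extinction, Proposition~\ref{prop.discarded} to classify discarded pieces, Proposition~\ref{prop.surgeryiso} to undo surgeries, backward induction over surgery times, and a final straightening step --- matches the paper's strategy. However, there is a genuine gap in the base case, namely in how you reduce the non-convex discarded components to marble graphs.

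You claim that a capped $\eps$-tube ``contracts along its neck--cap structure to one or two marbles joined by a short string'' and that an $\eps$-tubular loop ``retracts radially onto a thin tube around its core curve, which is a marble circuit.'' Neither of these is right. A capped $\eps$-tube can be arbitrarily long and coiled, so no local contraction lands it on just one or two marbles, and more seriously, a thin solid torus is \emph{not} a marble circuit: by Definition~\ref{def.mg} a marble circuit is a gluing $\calG_{r_s}(K,\gamma)$ in which $K$ is a nonempty finite disjoint union of balls $\bar B_{r_m}(p)$, whereas a solid tubular neighbourhood of a closed curve has no marbles at all (indeed, Theorem~\ref{theo.isotopy2} isotopes a marble circuit \emph{to} a tube, not the other way round, and this uses substantial work). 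The missing idea is that one must perform \emph{additional, auxiliary} neck replacements (not part of the ambient flow) on each capped $\eps$-tube and $\eps$-tubular loop, cutting it into many components of uniformly small diameter, and then use Theorem~\ref{thm.glue} together with Proposition~\ref{prop.surgeryiso} to reconnect them with strings of radius $r_s$. Only then is each small piece a perturbation of a short capped cylinder, hence mean convex isotopic to a ball of radius $10r_m$, and one obtains an almost marble graph by letting the strings follow this isotopy by normal motion. Without this extra round of surgeries your base case simply does not produce a domain of the required form.

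There is also a secondary issue you gesture at but do not resolve: as you follow the strings backward through the flow, they may pass through regions that are modified by later surgeries, and one must actively move them out of those regions (cf.~Lemma~9.4 of \cite{BHH21}) to keep them $\bb$-controlled. Your ``cumulative parameters'' paragraph does not address this obstruction; the paper's proof flags it explicitly and defers to the Euclidean argument, which carries over.
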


\begin{theo}\label{theo.isotopy2}
Every marble tree in $\bar{K}$ is mean convex isotopic to a closed geodesic $\eps$-ball and every marble circuit is mean convex isotopic to the solid $\varepsilon$-tubular neighbourhood of a curve and this curve is a core of the marble circuit (which is a solid torus).
\end{theo}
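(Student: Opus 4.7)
The plan is to mirror the Euclidean approach of \cite{BHH21, BHH19}, using the ambient gluing Theorem \ref{thm.glue} and brief runs of mean curvature flow as finishing tools. I would prove both statements by induction on the number of marbles, using a \emph{merge move} that absorbs one marble into a neighbouring marble along the connecting string while preserving mean convexity.

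For the marble tree, I induct on the number $n$ of marbles. When $n=1$ the curve $\gamma$ is empty and $\calG_{r_s}(K,\gamma) = \bar B_{r_m}(p)$ is a single geodesic ball, which mean curvature flow contracts to a geodesic $\eps$-ball; no surgeries arise on such a uniformly convex region, as can be seen via the evolution equation in Proposition \ref{prop.extenction}. For the inductive step, pick a leaf marble $\bar B_{r_m}(p_0)$ attached by a single geodesic string $\sigma_0$ to its unique neighbour $\bar B_{r_m}(p_1)$. I would construct a continuous path in $\calX_{\bA,\bb}$ that slides $p_0$ along $\sigma_0$ toward $p_1$, shortening $\sigma_0$ to nearly zero length; applying $\calG_{r_s}$ along this path, Theorem \ref{thm.glue} produces a mean convex isotopy between the original marble tree and a ``dumbbell'' configuration with overlapping marble balls. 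A brief mean curvature flow then rounds this dumbbell into a single $\alpha$-noncollapsed geodesic ball, producing an almost marble tree with $n-1$ marbles (which can be perturbed to a genuine marble tree by straightening the incident strings into geodesic segments near the new marble). The inductive hypothesis then completes the argument.

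For the marble circuit, the same merge move applied repeatedly along the cycle (choosing at each step a string whose contraction does not break the loop topology) reduces the configuration to a single marble $\bar B_{r_m}(p)$ joined to itself by one geodesic arc $\gamma_1$, with $\calG_{r_s}(\bar B_{r_m}(p),\gamma_1)$ a solid torus. Continuously shrinking $r_m$ via the gluing map, so that the marble smoothly becomes indistinguishable from the string, produces a thin mean convex tube around a simple smooth closed curve $\gamma_{\mathrm{core}}$; by construction $\gamma_{\mathrm{core}}$ is isotopic to the union $\gamma_1 \cup \{\text{diameter of }\bar B_{r_m}(p)\}$ inside the marble circuit, hence is a core of the circuit as a solid torus. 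A final interpolation isotopes this tube to the honest solid $\eps$-tubular neighbourhood $N_\eps(\gamma_{\mathrm{core}})$, which is itself mean convex for small $\eps$ because the two principal curvatures in the disk direction are of order $\eps^{-1}$ and therefore dominate any tangential contribution from the ambient geometry of $\bar K$.

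The main obstacle is the merge move, i.e.\ showing that a dumbbell of two overlapping $r_m$-balls connected by a thin string in $(M,g)$ can be smoothed to a single mean convex geodesic ball. In the Euclidean setting this is handled by an explicit deformation in \cite{BHH21}; in the ambient setting I would carry it out in normal coordinates centred at $p_1$, invoking the Euclidean result, then transporting back via $\exp_{p_1}$ using the perturbative comparison from Lemma \ref{lemma.glue}. The compatibility of the $(\bA,\bb)$-controls after each merge also needs checking, but since only finitely many merges occur and each degrades the constants by a uniformly small amount (the marbles and strings are small relative to $\inj(\bar K)$ and to the ambient curvature scale), the induction closes with admissible control parameters throughout.
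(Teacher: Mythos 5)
Your overall strategy (induction on the number of marbles, eliminating a leaf at each step, treating the circuit similarly) matches the paper in spirit, but the central ``merge move'' is genuinely different from what the paper does, and as formulated it has a real gap.

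The paper does \emph{not} slide the leaf marble $\bar B_1$ into its neighbour $\bar B_2$ and then appeal to mean curvature flow. Instead, it (i) invokes a \emph{rearrangement lemma} (Lemma~5.3 of \cite{BHH21}, transplanted to $M$ via normal coordinates) to move all other strings incident to $\bar B_2$ into the southern hemisphere, freeing up the north pole; (ii) slides $\bar B_1$ along $\gamma_{1,2}$ only to distance $10r_m$ from $\bar B_2$ --- never overlapping; and (iii) applies the explicit Euclidean \emph{marble reduction} of Proposition~5.5 of \cite{BHH21} in normal coordinates at the centre of $\bar B_2$, transported back to $M$ via Lemma~\ref{lemma.glue}, to retract the leaf marble and its string into $\bar B_2$ through an explicit, fully controlled mean convex isotopy. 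For the circuit, after deleting leaves the paper does not reduce to a single self-looped marble; it instead rearranges the two strings at each cycle-marble to antipodal points and replaces that marble by a tube segment (Theorem~4.1 of \cite{BHH19} in the lifted Euclidean picture), repeating around the loop until only the tubular neighbourhood of a core curve is left.

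The gap in your version is the merge move itself. As $\sigma_0$ shrinks to ``nearly zero length'' the configuration leaves $\calX_{\bA,\bb}$: condition (3) of Definition~\ref{def.controlled} forces a minimum string length of order $\bb$, and in the proof of Theorem~\ref{thm.glue} the admissible range $(0,\bar r)$ for the gluing radius is itself controlled by $\min_{q_1,q_2\in\bord\gamma} d(q_1,q_2)$, so $\bar r\to 0$ as the endpoints approach each other and the map $\calG_{r_s}$ stops being defined for your fixed $r_s$. Once the balls overlap, $(K,\gamma)$ is not a controlled configuration at all. Likewise, ``a brief mean curvature flow rounds this dumbbell'' is not substantiated and is in fact false for the relevant geometry: with $r_s\ll r_m$ the dumbbell has a thin neck, which under mean curvature flow pinches off rather than rounding out, forcing surgeries that disconnect the two balls again. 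This is precisely why the paper resorts to an explicit hand-built isotopy (the marble reduction) rather than the flow. Finally, your $n=1$ base case does not need mean curvature flow: a geodesic $r_m$-ball can simply be shrunk radially to a geodesic $\eps$-ball through mean convex geodesic balls. If you want to salvage the merge idea, you would need either to stop the slide at a finite separation and supply an explicit local isotopy (which is exactly Proposition~5.5 of \cite{BHH21}), or to prove a rounding statement for thin dumbbells that the flow-with-surgery machinery here does not give you.
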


Theorem \ref{theo.sphere} will essentially be a direct consequence of these results while the proofs of our other main results will require further topological arguments in the next section. We start with the proof of Theorem \ref{theo.isotopy1}.

\begin{proof}[Proof of Theorem \ref{theo.isotopy1}]
The proof is essentially the same as in the work of the first author with Haslhofer and Hershkovits \cite{BHH19, BHH21} with only very minor modifications. We therefore try to be brief.

Given a mean convex domain $K_0$ in the fixed compact domain $\bar{K}$, fix $\alpha$ and $\delta$ small enough as well as scales $\bH$ and cap-separation parameter $\Gamma$ large enough and then flow $K_0$ by mean curvature flow with surgery according to the existence result from Theorem \ref{thm.existence}. By Proposition \ref{prop.extenction} this flow will become extinct in finite time and by Proposition \ref{prop.discarded} we know how the discarded components look like. We note that everything happens in the compact domain $\bar{K}$, thus allowing a direct application of Theorem \ref{thm.glue}.

We first claim that for sufficiently small $r_s \ll r_m$ (much smaller than the neck size) all discarded components from this flow are mean convex isotopic to almost marble graphs, compare with Proposition 7.4 of \cite{BHH21} and Lemma 3.6 of \cite{BHH19}. While this is obvious for convex discarded components (that are isotopic to a single marble), for capped $\eps$-tubes and $\eps$-tubular loops we first perform a large number of surgeries (i.e.~replacements of necks by opposing caps as in Definition \ref{def.replacement}) obtaining a domain $K^\sharp$, and then glue back in strings of radius $r_s$ along almost geodesics $\gamma$ using our gluing result from Theorem \ref{thm.glue}. By Proposition \ref{prop.surgeryiso} the resulting domain $\mathcal{G}_{r_s}(K^\sharp,\gamma)$ is mean convex isotopic to the original discarded component. If this is done in such a way that the connected components of $K^\sharp$ have sufficiently small diameter, then these are all very small perturbations of short capped off cylinders and hence isotopic to small balls of radius, say, $10r_m$. Denote by $\{K^\sharp_t\}$ the mean convex isotopy from $K^\sharp$ to this union of balls and by $\{\gamma_t\}$ the family of curves starting with $\gamma$ from above that are then extended such that the end points follow the isotopy $\{K^\sharp_t\}$ by normal motion. Then $\mathcal{G}_{r_s}(K_t^\sharp,\gamma_t)$ is a mean convex isotopy from $\mathcal{G}_{r_s}(K^\sharp,\gamma)$ to an almost marble graph.

We then use a backwards induction scheme described in more detail in \cite{BHH21} that essentially works as follows. At the extinction time $t_k$ there are only discarded components. Each of them is isotopic to an almost marble graph as just described. This gives a mean convex isotopy from $K^{-}_{t_{k}}$ to an almost marble graph (possibly disconnected). We then follow the mean curvature flow back to the previous surgery time $t_{k-1}$ yielding an extension to a mean convex isotopy between $K^{+}_{t_{k-1}}$ and the same almost marble graph. Each component that was discarded at $t_{k-1}$ is also isotopic to an almost marble graph and thus $K^{\sharp}_{t_{k-1}}$ is mean convex isotopic to an almost marble graph (namely the union of all previously collected almost marble graphs). Since $K^{\sharp}_{t_{k-1}}$ has been obtained from $K^{-}_{t_{k-1}}$ by replacing necks by opposing caps, we can follow the exact same strategy as in the paragraph above: first glue back in a string along almost geodesics $\gamma$ and use Proposition \ref{prop.surgeryiso} to obtain a mean convex isotopy from $K^{-}_{t_{k-1}}$ to $\mathcal{G}_{r_s}(K^\sharp_{t_{k-1}},\gamma)$. Next use the isotopy of $K^\sharp_{t_{k-1}}$ to an almost marble graph and extend $\gamma$ following this isotopy by normal motion until all the end points arrive on some marble, yielding a mean convex isotopy of the pre-surgery domain $K^{-}_{t_{k-1}}$ to a (new) almost marble graph (obtained from the old one by gluing in additional strings). We can follow this procedure back in time until we arrive at a mean convex isotopy from the initial domain $K_0$ to an almost marble graph. Finally, we shrink down the marbles from radius $10r_m$ to radius $r_m$ and extend the strings radially in order to obtain a proper marble graph rather than just an almost marble graph.

There is only one additional technical detail that we have ignored in the previous paragraph: if at some point $\gamma$ is extended through regions that are modified by later surgeries, we artificially move it out of these surgery regions. This argument works in the exact same way as in the Euclidean case described in \cite[Lemma 9.4]{BHH21}. The proof is then complete.
\end{proof}

\begin{proof}[Proof of Theorem \ref{theo.isotopy2}]
Again, the proof follows \cite{BHH19, BHH21}. An important ingredient is a rearrangement result, which allows to move curves/strings meeting a fixed marble to meet that marble in points of our choice. Intuitively, we shrink the marble radius to create ``free space'' in which we can move the curves around. Shrinking for example by factor $20$ will create enough space to first move the curves around (in the outer region of the free space created) and then let them be geodesic segments before meeting the marble (in the inner region of the new free space). This procedure makes our control parameters worse, but if we enlarge the whole manifold $(M,g)$ by the same factor, then the control parameters remain the same (while ambient curvature terms decrease) and therefore Theorem \ref{thm.glue} still works the same, showing that without rescaling everything goes through if we also decrease the string radius by factor $20$. This rearrangement result was proved precisely in Euclidean space as Lemma 5.3 of \cite{BHH21} and the exact same argument also works in an ambient manifold using geodesic normal coordinates with origin at the center of the marble where we rearrange curves.

Let us now first consider the case of a \emph{marble tree}. If there is only one marble, we are done. Else, fix a leaf of the tree, i.e. a marble $\bar{B}_1$ that has only one string attached and call $\bar{B}_2$ the unique marble that is connected to $\bar{B}_1$. Say the curve $\gamma_{1,2}$ along which the two marbles have been glued together meets $\bar{B}_2$ at its north pole. By what we have just discussed above, we can move all other curves/strings that meet $\bar{B}_2$ to the southern hemisphere (a step that requires us to shrink both marble and string radius). We then move $\bar{B}_1$ along the curve $\gamma_{1,2}$ closer to $\bar{B}_2$ (possibly violating Point \ref{mg3} of Definition \ref{def.mg} at marble $\bar{B}_1$ for some time along the way) until it has distance $10r_m$ to $\bar{B}_2$. Now the connecting curve $\gamma_{1,2}$ is a geodesic segment. We then use the exponential map based at the center of $\bar{B}_2$ to lift the domain to the tangent space (where $\gamma_{1,2}$ becomes a straight line) and then use the Euclidean marble reduction result from Proposition 5.5 of \cite{BHH21} to completely remove the leaf marble $\bar{B}_1$ and connecting curve $\gamma_{1,2}$ in a mean convex way. By Lemma \ref{lemma.glue}, applying the exponential map to this mean convex isotopy, we find the desired mean convex isotopy in $(M,g)$. We can repeat this step until we are only left with one marble which is our desired $\eps$-ball.

In the case of a \emph{marble circuit}, we first use the above argument to get rid of all leaf marbles. In the resulting marble circuit there are two curves meeting each marble and by the rearrangement argument from the first paragraph of the proof, we can move them in such a way that they meet the marble at antipodal points. Let us now focus on one marble: there are two curves $\gamma_1$ and $\gamma_2$ meeting the marble at antipodal points as radial geodesics. We can then join these geodesics with a geodesic segment in the interior of the marble. We then lift the problem again to the tangent space via the exponential map based at the center of the marble. This brings us back to the Euclidean case (or more precisely, in fact to the model case of a tube along a straight line glued to a ball). By an easy argument (see Theorem 4.1 of \cite{BHH19}), we can find a mean convex isotopy deforming this domain to a solid tube. This way we get rid of the marble and are only left with the tubular neighbourhood of a geodesic where we previously had a marble. Performing this step with all marbles, we find our solid $\varepsilon$-tubular neighbourhood of a curve. It is clear that the curve is a core of this resulting solid torus and thus also of the original marble circuit. The proof of the theorem is complete.
\end{proof}


\section{Proofs of the main results}\label{sec.topology}

We start with the proof of Theorem \ref{theo.sphere} which follows directly from what we have done in the previous sections.

\begin{proof}[Proof of Theorem \ref{theo.sphere}]
Let $(M,g)$ be a complete, orientable Riemannian three-manifold with nonnegative Ricci curvature. Take two elements of $\calM_{H>0}(\Sph^2,M)$, represented by two embedded mean convex two-spheres $\Sigma_1$ and $\Sigma_2$. By Proposition \ref{prop.bounddomain}, $\Sigma_i$ bounds a mean convex domain $\bar{K}_i$ ($i=1,2$) which by Theorems \ref{theo.isotopy1} and \ref{theo.isotopy2} is mean convex isotopic to a small ball $\bar{B}_{\eps_i}(p_i)$ inside $\bar{K}_i$. (Note that all steps are performed inside the compact regions $\bar{K}_i$). Connect $p_1$ and $p_2$ with each other along a curve $\gamma$. Let $0<\eps \leq \eps_i$ be small enough such that \emph{all} balls $\bar{B}_{\eps}(p)$ with $p \in \gamma$ are mean convex. Then we have another mean convex isotopy from $\bar{B}_{\eps_1}(p_1)$ to $\bar{B}_{\eps_2}(p_2)$ by first shrinking the ball to size $\eps$, then moving it along $\gamma$ and finally expanding it again to size $\eps_2$. Combining everything $\bar{K}_1$ is mean convex isotopic to $\bar{K}_2$ and therefore $\Sigma_1$ and $\Sigma_2$ are in the same path component of the moduli space, proving the theorem.
\end{proof}

We are left with proving the two theorems about Heegaard tori. For this reason, we fix a lens space $L(p,q)$ and a metric $g$ on it having nonnegative Ricci curvature. As $L(p,q)$ is compact, we can simply work with $\bar{K}=L(p,q)$. Moreover, we fix once and for all a Heegaard torus $\Sigma^0$ in $L(p,q)$ and denote by $V_1^0$ and $V_2^0$ the two solid tori whose common boundary is $\Sigma^0$. We also fix for each $k\in\{1,2\}$ a core $\gamma_k^0$ of $V_k^0$ that is a controlled curve.

For every $\eps>0$, we denote by $V_k^\eps$ the $\eps$-neighbourhood of $\gamma_k$ and by $T_k^\eps$ its boundary, $k\in\{1,2\}$. We choose $\eps$  small enough so that $V_k^\eps$ is a mean convex domain, and for every $0<\eps'<\eps$ the domain $V_k^{\eps'}$ is mean convex isotopic to $V_k^\eps$. In particular, the path components of $T_1^\eps$ and $T_2^\eps$ in $\calM_{H>0}(p,q)$ are well-defined. 

\begin{prop}\label{prop.mcf}
Let $\Sigma$ be a mean convex Heegaard torus $\Sigma$ and let $V$ be the unique mean convex domain such that $\partial V=\Sigma$. Then there exists $k\in\{1,2\}$ such that $V$ is mean convex isotopic to $V_k^\eps$. Moreover, if $q\not\cong \pm 1 \!\mod p$, then such a $k$ is unique.
\end{prop}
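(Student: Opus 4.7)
The plan is to combine the reduction results of Section~\ref{sec.BHH} with Bonahon's uniqueness theorem for Heegaard tori in lens spaces, thereby translating the mean convex isotopy problem for $V$ into a question about isotopy classes of cores of solid tori in $L(p,q)$.

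For existence, I would first apply Theorem~\ref{theo.isotopy1} to the mean convex domain $V\subset \bar K=L(p,q)$ to obtain a mean convex isotopy from $V$ to some marble graph $G$. Since $V$ is a connected solid torus and is therefore homotopy equivalent to $S^1$, so is $K\cup\gamma$, meaning that $G$ is in fact a marble circuit. Theorem~\ref{theo.isotopy2} then yields a mean convex isotopy from $G$ to a solid $\varepsilon'$-tubular neighbourhood $N_{\varepsilon'}(\gamma)$ of a simple closed curve $\gamma$ which is a core of $G$, and consequently also of $V$. By choosing $\varepsilon'$ small enough, $\gamma$ may be assumed to be a controlled curve.

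Next, because $\Sigma=\bord V$ is a Heegaard torus in $L(p,q)$, Bonahon's uniqueness theorem \cite{Bo83} provides an ambient isotopy of $L(p,q)$ taking $\Sigma$ to the reference Heegaard torus $\Sigma^0$; this isotopy carries $V$ to one of $V_1^0$ or $V_2^0$, say $V_k^0$, and in particular carries $\gamma$ to a simple closed curve which is isotopic to the chosen core $\gamma_k^0$. I would concatenate these isotopies into a smooth family of simple closed curves $\{\gamma_t\}_{t\in[0,1]}$ with $\gamma_0=\gamma$ and $\gamma_1=\gamma_k^0$. By compactness of $L(p,q)$ together with smoothness of the family, the curves $\gamma_t$ have uniformly bounded curvature and a uniform positive lower bound on normal injectivity radius, so for $\varepsilon''>0$ sufficiently small each tubular neighbourhood $N_{\varepsilon''}(\gamma_t)$ is mean convex. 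This gives a mean convex isotopy from $N_{\varepsilon''}(\gamma)$ to $V_k^{\varepsilon''}$. Concatenating with a mean convex isotopy shrinking the tube radius from $\varepsilon'$ to $\varepsilon''$ at the start and with the mean convex isotopy from $V_k^{\varepsilon''}$ to $V_k^\varepsilon$ guaranteed by the setup preceding the proposition, I would complete a mean convex isotopy from $V$ to $V_k^\varepsilon$.

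For uniqueness under the assumption $q\not\cong\pm 1\!\mod p$, I would argue by contradiction: if $V$ were mean convex isotopic to both $V_1^\varepsilon$ and $V_2^\varepsilon$, concatenation of the two isotopies would yield a mean convex isotopy—and hence in particular an ambient isotopy of embedded solid tori—from $V_1^\varepsilon$ to $V_2^\varepsilon$. The cores $\gamma_1^0$ and $\gamma_2^0$ would then be isotopic in $L(p,q)$, which by Bonahon's lemma recalled in the introduction is equivalent to $q\cong\pm 1\!\mod p$, contradicting the hypothesis. The main technical obstacle in the existence part is ensuring that mean convexity is preserved all along the isotopy between the two tubular neighbourhoods, which is precisely why I first pass to tubes so thin that the $1/\varepsilon''$ scaling of the mean curvature of their boundaries uniformly dominates any ambient curvature contributions along the family $\gamma_t$.
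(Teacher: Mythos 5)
Your proposal is essentially the same as the paper's proof: both reduce $V$ to a thin tubular neighbourhood of a controlled core curve via Theorems~\ref{theo.isotopy1} and \ref{theo.isotopy2}, invoke Bonahon's uniqueness theorem to identify which reference solid torus this core is isotopic to, and then pass to sufficiently thin tubes along the resulting family of curves so that mean convexity dominates ambient curvature. One small imprecision to flag: for uniqueness you cite ``Bonahon's lemma recalled in the introduction'' as giving the \emph{equivalence} between $\gamma_1^0 \simeq \gamma_2^0$ and $q\cong\pm 1\!\mod p$, but that lemma only gives the implication $q\cong\pm 1\!\mod p \Rightarrow \gamma_1^0\simeq\gamma_2^0$; the direction you actually need (that $q\not\cong\pm 1\!\mod p$ forces the cores to be non-isotopic) is supplied, as in the paper, by the homology computation of Bonahon--Otal showing $[\gamma_1^0]\neq[\gamma_2^0]$ in $\mathrm{H}_1(L(p,q))/\pm 1$.
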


\begin{proof}
Since $\Sigma$ is Heegaard, $V$ is a solid torus. By Bonahon's theorem (cf.~\cite[Th\'eor\`eme 1]{Bo83}), $\Sigma$ is ambient isotopic to $\Sigma^0$, which means that there is a continuous 1-parameter family $\phi_t$ of self diffeomorphisms of $L(p,q)$ such that $\phi_0$ is the identity of $L(p,q)$ and $\phi_1(\Sigma)=\Sigma^0$. We choose $k\in\{1,2\}$ so that $\phi_1(V)=V_k^0$.

By Theorems \ref{theo.isotopy1} and \ref{theo.isotopy2}, $V$ is mean convex isotopic to the solid $\varepsilon$-tubular neighbourhood $V^\eps$ of some controlled curve $\gamma$. It follows that $V^\eps$ is isotopic to $V_k^0$. Hence their cores $\gamma$ and $\gamma_k^0$ are also isotopic. If $\eps$ is chosen small enough, then the tubular neighbourhoods $V^\eps$ and $V_k^\eps$ are mean convex isotopic. Thus $V$ is mean convex isotopic to $V_k^\eps$.

If $q\not\cong \pm 1 \!\mod p$, then by a classical computation (cf.~\cite[Section 6]{BO83}) the curves $\gamma_1^0$ and $\gamma_2^0$ are not homologous, hence not isotopic. This implies that $V_1^\eps$ is not isotopic to $V_2^\eps$.
\end{proof}

We now turn to the proof of Theorem~\ref{theo.mainone}, knowing already that $\calM_{H>0}(p,q)$ has at most two components by Proposition \ref{prop.mcf}.

\begin{proof}[Proof of Theorem~\ref{theo.mainone}]
There are two cases.

\setcounter{case}{0}
\begin{case} 
$q\cong \pm 1 \!\mod p$. In this case we have to prove that $\calM_{H>0}(p,q)$ is path-connected.
\end{case}

In view of Proposition~\ref{prop.mcf}, there only remains to show that $T_1^\eps$ and $T_2^\eps$ are mean convex isotopic to each other. By~\cite[Lemme 4]{Bo83}, $\gamma_1^0$ is isotopic to $\gamma_2^0$. Again, choosing $\eps$ small enough it follows that $T_1^\eps$ is mean convex isotopic to $T_2^\eps$.

\begin{case} \label{case2}
$q\not\cong \pm 1 \!\mod p$. Here we must show that $T_1^\eps$ and $T_2^\eps$ are \emph{not} mean convex isotopic to each other.
\end{case}

We set $H=\mathrm{H}_1(L(p,q),\ZZ) / {\pm 1}$. Each simply closed curve $\gamma$ in $L(p,q)$ defines a class $[\gamma]$ in $H$. As we recalled above, a homology computation shows that the hypothesis $q\not\cong \pm 1 \!\mod p$ implies that $[\gamma_1] \neq [\gamma_2]$.

Let $T$ be a mean convex Heegaard torus in $L(p,q)$ and $V$ be the unique mean convex solid torus such that $\bord V=T$. By Proposition~\ref{prop.mcf}, there exists a unique $k\in\{1,2\}$ such that $V$ is mean convex isotopic to $V_k^\eps$. We set $\phi(T)=[\gamma_k]$.

This defines a continuous map $\phi$ from $\calM_{H>0}(p,q)$ to the discrete space $H$ with the following property: for each $k\in\{1,2\}$, we have $\phi(T_k^\eps)=[\gamma_k]$. This entails that $T_1^\eps$ and $T_2^\eps$ belong to two different path components of $\calM_{H>0}(p,q)$. This completes the proof in Case \ref{case2}, hence that of Theorem~\ref{theo.mainone}.
\end{proof}

Finally, we prove Theorem \ref{theo.maintwo}.

\begin{proof}[Proof of Theorem~\ref{theo.maintwo}]

First note that if $H\geq 0$ on $\Sigma$, then unless $H\equiv 0$, we will \emph{instantaneously} obtain positive mean curvature when starting the mean curvature flow from $\Sigma$. This follows by applying the maximum principle to the evolution equation
\begin{equation*}
\partial_t H = \Delta H + |A|^2 H + \Ric_g(\nu,\nu) H \geq \Delta H + \tfrac{1}{2} H^3.
\end{equation*}
This means that tori with $H\geq 0$ and $H \not \equiv 0$ lie on the boundary of the moduli space $\calM_{H>0}(p,q)$ and therefore do not add to the number of path components.

It remains to consider the case of minimal Heegaard tori, i.e. Heegaard tori with $H \equiv 0$. For such a torus, denote by $V_1$ and $V_2$ the two solid tori bounded by $\Sigma$ and let $\nu$ be the unit normal vector field on $\Sigma$ pointing towards $V_1$. We then consider the stability operator $L_\Sigma \varphi=\Delta_\Sigma \varphi +(\abs{A^\Sigma}^2+\Ric_g(\nu,\nu))\varphi$, where $\varphi: \Sigma \to \RR$. Due to the assumption that $\Ric_g >0$, we know that the largest eigenvalue $\lambda$ of $L_\Sigma$ must be positive, meaning that $\Sigma$ is unstable. We then define for $t\in (-\eps,\eps)$ the family of tori 
\begin{equation*}
\Sigma_t = \exp_{x\in \Sigma} (t\varphi_\lambda(x)\nu(x)),
\end{equation*}
where $\varphi_\lambda$ is a normalised, positive eigenfunction of the eigenvalue $\lambda$. Also consider unit vector fields $\nu_t$ on $\Sigma_t$, continuous in $t$ and such that $\nu_0=\nu$. We obtain 
\begin{equation*}
\frac{\partial}{\partial t} \Big|_{t=0} H_{\Sigma_t} = L_\Sigma \varphi_\lambda = \lambda \varphi_\lambda,
\end{equation*}
and therefore
\begin{equation*}
H_{\Sigma_t} = t \lambda \varphi_\lambda + O(t^2).
\end{equation*}
In particular, for $t>0$, $\Sigma_t$ is mean convex with respect to the normal vector field $\nu_t$ and by our previous arguments $\Sigma_t$ is mean convex isotopic to a tubular neighbourhood of a core $\gamma_1$ of $V_1$. On the other hand, for $t<0$, $\Sigma_t$ is mean convex with respect to $-\nu_t$ and therefore mean convex isotopic to a tubular neighbourhood of a core $\gamma_2$ of $V_2$. This means in particular that if $\calM_{H>0}(p,q)$ has two path components, $\Sigma$ is ($H\geq0$)-isotopic to mean convex Heegaard tori in \emph{both} components and thus $\calM_{H\geq0}(p,q)$ is path-connected.

The theorem now follows by noting that for $\Ric_g>0$, there always exists a minimal Heegaard torus. This follows for example from \cite{KMN} and \cite{LW}.
\end{proof}


\makeatletter
\def\@listi{%
  \itemsep=0pt
  \parsep=1pt
  \topsep=1pt}
\makeatother
{\fontsize{10}{12}\selectfont

}

\printaddress


\begin{thebibliography}{99}

\bibitem{AFM}
V.~Agostiniani, M.~Fogagnolo, and L.~Mazzieri.
\emph{Sharp geometric inequalities for closed hypersurfaces in manifolds with nonnegative Ricci curvature.}
Invent. Math. \textbf{222} (2020), 1033--1101.

\bibitem{Alex}
J.~W.~Alexander. 
\emph{On the subdivision of 3-space by a polyhedron.} 
Proc. Nat. Acad. Sei. U.S.A. \textbf{10} (1924), 6--8.

\bibitem{An12}
B.~Andrews. 
\emph{Noncollapsing in mean convex mean curvature flow.} 
Geom. Topol. \textbf{16}:3 (2012), 1413--1418.

\bibitem{BK19}
R.~Bamler and B.~Kleiner. 
\emph{Ricci flow and contractibility of spaces of metrics.}
Preprint, ArXiv:1909.08710 (2019).

\bibitem{Bo83}
F.~Bonahon.
\emph{Diff\'eotopies des espaces lenticulaires.}
Topology \textbf{22}:3 (1983), 305--314.

\bibitem{BO83}
F.~Bonahon and J.-P.~Otal.
\emph{Scindements de Heegaard des espaces lenticulaires.}
Ann. Sci. Éc. Norm. Supér. (4) \textbf{16}:3 (1983), 451--466.

\bibitem{Br16}
S.~Brendle. 
\emph{An inscribed radius estimate for mean curvature flow in Riemannian manifolds.} 
Ann. Sc. Norm. Super. Pisa Cl. Sci. \textbf{16}:4 (2016), 1447--1472.

\bibitem{BH16}
S.~Brendle and G.~Huisken. 
\emph{Mean curvature flow with surgery of mean convex surfaces in $\RR^3$.}
Invent. Math. \textbf{203} (2016), 615--654.

\bibitem{BH18}
S.~Brendle and G.~Huisken. 
\emph{Mean curvature flow with surgery of mean convex surfaces in $3$-manifolds.} 
J. Eur. Math. Soc. \textbf{20}:9 (2018), 2239--2257.

\bibitem{BHH21}
R.~Buzano, R.~Haslhofer, and O.~Hershkovits.
\emph{The moduli space of two-convex embedded spheres.} 
J. Differential Geom. \textbf{118}:2 (2021), 189--221.

\bibitem{BHH19}
R.~Buzano, R.~Haslhofer, and O.~Hershkovits.
\emph{The moduli space of two-convex embedded tori.} 
Int. Math. Res. Not. IMRN \textbf{2019}:2 (2019), 392--406.

\bibitem{BNS21}
R.~Buzano, H.~Nguyen, and M.~Schulz.
\emph{Noncompact self-shrinkers for mean curvature flow with arbitrary genus.}
J. Reine Angew. Math (Crelle) \textbf{818} (2025), 35--52.

\bibitem{DiMM}
G.~Di Matteo and A.~Malchiodi.
\emph{Double bubbles with high constant mean curvature in Riemannian manifolds.}
Nonlinear Anal. \textbf{224} (2022), No.~113088, 41 pp.

\bibitem{HK19}
R.~Haslhofer and D.~Ketover.
\emph{Minimal $2$-spheres in $3$-spheres.}
Duke Math. J. \textbf{168}:10 (2019), 1929--1975.

\bibitem{HK17}
R.~Haslhofer and B.~Kleiner.
\emph{Mean curvature flow with surgery.}
Duke Math. J. \textbf{166}:9 (2017), 1591--1626.

\bibitem{He98}
P.~Heegaard. 
\emph{Forstudier til en topologisk Teori for de algebraiske Fladers Sammenh{\ae}ng.}
Det Nordiske Forlag, Copenhagen (1898).

\bibitem{HS09}
G.~Huisken and C.~Sinestrari.
\emph{Mean curvature flow with surgeries of two-convex hypersurfaces.}
Invent. Math. \textbf{175} (2009), 137--221.

\bibitem{Kasue}
A.~Kasue.
\emph{Ricci curvature, geodesics and some geometric properties of Riemannian manifolds with boundary.} 
J. Math. Soc. Jpn. \textbf{35}:1 (1983), 117--131.

\bibitem{KMN}
D.~Ketover, F.~C.~Marques, and A.~Neves.
\emph{The catenoid estimate and its geometric applications.}
J. Differential Geom. \textbf{115}:1 (2020), 1--26.

\bibitem{KS93}
M.~Kreck and S.~Stolz.
\emph{Nonconnected moduli spaces of positive sectional curvature metrics.}
J. Am. Math. Soc. \textbf{6}:4 (1993), 825--850. 

\bibitem{LW}
X.~Li and Z.~Wang.
\emph{Existence of embedded minimal tori in three-spheres with positive Ricci curvature.}
Preprint, ArXiv:2409.10391 (2024).

\bibitem{Liu}
G.~Liu.
\emph{3-manifolds with nonnegative Ricci curvature.}
Invent. Math. \textbf{193} (2013), 367--375.

\bibitem{Mar12}
F.~C.~Marques.
\emph{Deforming three-manifolds with positive scalar curvature.}
Ann. Math. (2) \textbf{176}:2 (2012), 815--863.

\bibitem{RS01}
J.~Rosenberg and S.~Stolz.
\emph{Metrics of positive scalar curvature and connections with surgery.} 
Ann. Math. Stud. \textbf{149} (2001), 353--386.

\bibitem{SW09}
W.~Sheng and X.~Wang. 
\emph{Singularity profile in the mean curvature flow.} 
Methods Appl. Anal. \textbf{16}:2 (2009), 139--155.

\bibitem{We16}
H.~Weyl.
\emph{Über die Bestimmung einer geschlossenen konvexen Fläche durch ihr Linienelement.}
Vierteljahrsschr. Naturforsch. Ges. Zur., \textbf{61} (1916), 70--72.

\bibitem{Wh40}
J.~H.~C.~Whitehead.
\emph{On $C^1$-complexes.}
Ann. Math. (2) \textbf{41} (1940), 809--824. 

\bibitem{Wil96}
T.~J.~Willmore. 
\emph{Riemannian geometry.} 
Oxford science publications (1996).


\end{thebibliography}
\end{document}